\newcommand{\n}{\mathbb{N}}
\newcommand{\z}{\mathbb{Z}}
\newcommand{\re}{\mathbb{R}}
\newcommand{\al}{\alpha}
\newcommand{\la}{\lambda}
\newcommand{\g}{\gamma}
\newcommand{\ep}{\varepsilon}
\newcommand{\de}{\delta}
\newtheorem{theorem}{Theorem}
\newtheorem{lemma}[theorem]{Lemma}     
\newtheorem{corollary}[theorem]{Corollary}
\newtheorem{proposition}[theorem]{Proposition}
\newtheorem{uw}[theorem]{Remark}
\title{BACKWARD RAUZY-VEECH ALGORITHM AND HORIZONTAL SADDLE CONNECTIONS\footnote{MSC classification: 37E05, 37E35}}
\author[P.~Berk]{Przemys\l aw Berk}
\address{Faculty of Mathematics and Computer Science, Nicolaus
	Copernicus University, ul. Chopina 12/18, 87-100 Toru\'n, Poland}
\address{Institut für Mathematik, Universität Zürich, Winterthurerstrasse 190, CH-8057 Zürich, Switzerland}
\email{zimowy@mat.umk.pl}
\begin{document}
	\baselineskip=14pt \maketitle
\begin{abstract} We study the combinatorial and dynamical properties of translations surfaces with horizontal saddle connections from the point of view of backward Rauzy-Veech induction. Namely, we prove that although the horizontal saddle connections do not rule out existence of the infinite orbit under backward Rauzy-Veech algorithm, they disallow the $\infty$- completeness of such orbit. Furthermore, we prove that if an orbit under backward Rauzy-Veech algorithm is infinite, then the minimality of the horizontal translation flow is equivalent to the eventual appearance of all horizontal saddle connections as sides of the polygonal represenation of a surface.
\end{abstract}

\vspace{1cm}
The main goal of this note is to study the relations between horizontal saddle connections and the combinatorics of the inverse Rauzy-Veech algorithm for translation surfaces as well as dynamics of the horizontal translation flows. In \cite{MUY} (Proposition 4.3) Marmi, Ulcigrai and Yoccoz prove that if a translation surface does not have horizontal saddle connections, then its backward Rauzy-Veech induction  orbit is indefinitely well-defined and $\infty$-complete, that is every symbol is a backward winner infinitely many times. In the same article the authors pose a question, whether these two conditions are equivalent. We answer affirmatively to this question in Theorem \ref{noinfty}. The proof utilizes only combinatorics and geometry of translation surfaces.

However, before proving Theorem \ref{noinfty}, we prove Proposition \ref{exist} which states that, typically, possessing horizontal saddle connections  does not rule out that the backward orbit with respect to the inverse Rauzy-Veech algorithm is well defined. Moreover, in Theorem \ref{min} we prove that appearance of horizontal connections as sides of polygonal representations of translations surfaces is closely tied to the minimality of the horizontal  translation flow. More precisely, we show that the horizontal translation flow is minimal if and only if all (if any) horizontal saddle connections appear as sides of a polygonal representation of a surface after applying a finite number of backward Rauzy-Veech induction steps.

\textbf{Acknowledgments:} The author would like to thank Corinna Ulcigrai for pointing out the problem and her continuous support and Frank Trujillo for many useful remarks. The research that lead to this result was supported by\\ \emph{Swiss National Science Foundation} Grant $200021\_188617/1$ and\\ \emph{Narodowe Centrum Nauki }Grant OPUS $14 2017/27/B/ST1/00078$.

\section{Interval exchange transformations and translations surfaces}
We recall first basic notions and properties related to IETs and translation surfaces. Let $\mathcal{A}$ be an alphabet of $\#\mathcal A\ge 2$ elements. 
For more information and basic properties, including the ergodic properties of interval exchange transformations, translation surfaces and Rauzy-Veech algorithm we refer the reader e.g. to \cite{Viana} and \cite{Yoccoz}. 

Let 
\[
\begin{split}
S_0^{\mathcal A}:=&\{\pi=(\pi_0,\pi_1):\mathcal A\to\{1,\ldots, \#\mathcal A\}\times \{1,\ldots, \#\mathcal A\};\\ &\pi_1\circ\pi_0^{-1}\{1,\ldots, k\}=\{1,\ldots, k\}\Rightarrow\ k=\#\mathcal A\}
\end{split}
\]
be the set of irreducible permutations, where $\pi_0$ and $\pi_1$ are bijections. Let us also denote by $\re_{>0}^{\mathcal A}$ the set of all $d$-dimensional positive real vectors and for every $\la\in \re_{>0}^{\mathcal A}$ let $|\la|:=\sum_{\al\in\mathcal A}\la_\al$.

An \emph{interval exchange transformation on $[0,|\la|)$ (IET)} $T=(\pi,\la)\in S_0^{\mathcal A}\times  \re_{>0}^{\mathcal A}$ is a bijective piecewise translation, where the intervals 
\[I_\al:=\left[\sum_{\beta\in\mathcal A; \pi_0(\beta)<\pi_0(\al) }\la_\beta, \sum_{\beta\in\mathcal A; \pi_0(\beta)\le \pi_0(\al) }\la_\beta\right)\text{ for }\al\in\mathcal A
	\]
	are rearranged inside $[0,|\la|)$ with respect to the permutation $\pi$. More precisely, for every $\al\in\mathcal A$, we have
\[
T(x)=x+\de_\al\quad\text{if}\quad x\in I_\al,
\]
where
\[
\de_\al=\sum_{\beta\in\mathcal A;\ \pi_1(\beta)<\pi_1(\al)}\la_\al-\sum_{\beta\in\mathcal A;\ \pi_0(\beta)<\pi_0(\al)}\la_\al.
\]
Note that $T$ preserves Lebesgue measure.

We denote by $\Omega^\pi=[\omega_{\al\beta}]_{\al,\beta}$ the associated  \emph{translation matrix}, with coefficients given by
\[
\omega_{\al\beta}:=\begin{cases}
+1 & \text{ if }\pi_0(\al)<\pi_0(\beta)\text{ and }\pi_1(\al)>\pi_1(\beta);\\
-1 & \text{ if }\pi_0(\al)>\pi_0(\beta)\text{ and }\pi_1(\al)<\pi_1(\beta);\\
\ \ 0 & \text{ otherwise.}
\end{cases}
\]
Then, if $\de:=[\de_\al]_{\al\in\mathcal A}$, we get
\[
\de=\Omega_{\pi}\cdot\la.
\]

On the space $S_0^{\mathcal A}\times  \re_{>0}^{\mathcal A}$ we consider an operator $R$ called \emph{Rauzy-Veech induction}, defined as $R(\pi,\la)=(\pi^1,\la^1)$, where $(\pi^1,\la^1)$ is the first return map of $(\pi,\la)$ to the interval $[0,|\la|-\min\{\la_{\pi_0^{-1}(d)},\la_{\pi_1^{-1}(d)}\})$. 
If  $\la_{\pi_0^{-1}(d)}>\la_{\pi_1^{-1}(d)}$ we say that $R$ is of "top" type and we say that it is of "bottom" type if $\la_{\pi_0^{-1}(d)}<\la_{\pi_1^{-1}(d)}$. We denote the symbol corresponding to the longer interval as $w$ (the winner) and to the shorter one as $l$ (the loser).

The map $R(\pi,\la)$ is properly defined as an interval exchange transformation of $d$ intervals if and only if $\la_{\pi_0^{-1}(d)}\neq\la_{\pi_1^{-1}(d)}$. Keane \cite{Keane} gave an equivalent condition on $(\pi,\la)$, for the iterations of Rauzy-Veech induction to be defined indefinitely. More precisely, we say that IET $T$ satisfies \emph{Keane's condition} if for every two discontinuities $a$ and $b$ of $T$ equality $T^n(a)=b$ for some $n\in\n$ implies $n=1$, $a=T^{-1}(0)$ and $b=0$.  In particular, if the vector $\la$ is rationally independent, that is for every choice of $c_\al\in\z,\ \al\in\mathcal A$ we have
\[
\sum_{\al\in\mathcal A}c_\al\la_\al=0\ \Rightarrow\ c_\al=0\text{ for every }\al\in\mathcal A,
\]
then $(\pi,\la)$ satisfies Keane's condition. When it is well defined, we denote $R^n(\pi,\la)=(\pi^n,\la^n)$ for every $n\in\n$. We say that the orbit of $(\pi,\la)$ via Rauzy-Veech induction is \emph{$\infty$-complete} if every symbol in $\mathcal A$ appears infinitely many times in the sequence of winners $\{w^n\}$.

Note that $\la^1=A^1(\pi,\la)\la$, where a matrix $A^1(\pi,\la)$ is defined in the following way
\[
A^1_{\al\beta}=\begin{cases}
1 & \text{ if }\al=\beta;\\
-1 & \text{ if }\al=w \text{ and } \beta=l;\\
0 & \text{ otherwise.}
\end{cases}
\]
Inductively, for every $n\in\n$ we define 
\[
A^n(\pi,\la)=A^1(\pi^{n-1},\la^{n-1})A^{n-1}(\pi,\la). 
\]
Then $\la^n=A^n(\pi,\la)\la$. We will refer  to $A^n(\pi,\la)$ as \emph{Rauzy-Veech matrices}. Note that for every $n\in\n$, the matrix $(A^n(\pi,\la))^{-1}$ is non-negative.

For every $\pi \in S_0^{\mathcal A}$ let 
\[
\begin{split}
\Theta^{\mathcal A}=\Theta^{\mathcal A}({\pi})&=\Big\{\tau\in\re^{\mathcal A};\ \sum_{\al\in\mathcal A;\pi_0(\al)\le k}\tau_{\al}>0\ \text{ and }\\&   \sum_{\al\in\mathcal A;\pi_1(\al)\le k}\tau_{\al}<0\ \text{ for every }\ k\in\{1,\ldots,d-1\}\Big\}.
\end{split}
\]
Then  every $(\pi,\la,\tau)\in S_0^{\mathcal A}\times \Lambda^{\mathcal A}\times \Theta^{\mathcal A}$\footnote{Note that this space is not really a product space since $\Theta^{\mathcal A}$ depends on $\pi$ and thus $S_0^{\mathcal A}\times \Lambda^{\mathcal A}\times \Theta^{\mathcal A}=\bigcup_{\pi\in S_0^{\mathcal A}} \{\pi\}\times \Lambda^{\mathcal A}\times \Theta^{\mathcal A}(\pi)$. However, we shall use this notation for simplicity.} may be see as a \emph{translation surface} as follows. More precisely, first we consider two broken line segments in $\mathbb C$
\[\bigcup_{k=1}^d\left[\sum_{\al\in\mathcal A;\  \pi_0(\al)<k}(\la_\al+i\tau_\al),\sum_{\al\in\mathcal A;\ \pi_0(\al)\le k}(\la_\al+i\tau_\al)\right]\ %\text{for $k=1,\ldots,d$},
\]  
and 
\[\bigcup_{i=1}^d\left[\sum_{\al\in\mathcal A;\ \pi_1(\al)<k}(\la_\al+i\tau_\al),\sum_{\al\in\mathcal A;\ \pi_1(\al)\le k}(\la_\al+i\tau_\al)\right].%\text{for $k=1,\ldots,d$}.
\]
Then we identify the segments corresponding to the same symbols via parallel translation. 
The endpoints of these segments are the \emph{singularity points} of the surface and are denoted by $\Sigma$ (which may be conical singularities as well as marked points). Note that some of the points $S\in\Sigma$ may correspond to many vertices of the polygon given by $(\pi,\la,\tau)$ before identification. For $0\le k\le\#\mathcal A$ we denote
\[\begin{split}
&a(\pi,\la,\tau,k):=\sum_{\al\in\mathcal A;\ \pi_0(\al)\le k}(\la_\al+i\tau_\al) \text{ and }\\
&b(\pi,\la,\tau,k):=\sum_{\al\in\mathcal A;\ \pi_1(\al)\le k}(\la_\al+i\tau_\al),
\end{split}
\]
the vertices of the polygon given by $(\pi,\la,\tau)$ (note that $a(\pi,\la,\tau,\#\mathcal A)=b(\pi,\la,\tau,\#\mathcal A)$ and $a(\pi,\la,\tau,0)=b(\pi,\la,\tau,0)=0$). From now on, for every $\al\in\mathcal A$, we will refer to the segment with endpoints $a(\pi,\la,\tau,\pi_0(\al)-1)$ and  $a(\pi,\la,\tau,\pi_0(\al))$ as well as to the segments $b(\pi,\la,\tau,\pi_0(\al)-1)$ and  $b(\pi,\la,\tau,\pi_0(\al))$  as segments \emph{corresponding} to $\al$.

			\begin{figure}
	\includegraphics[scale=0.5]{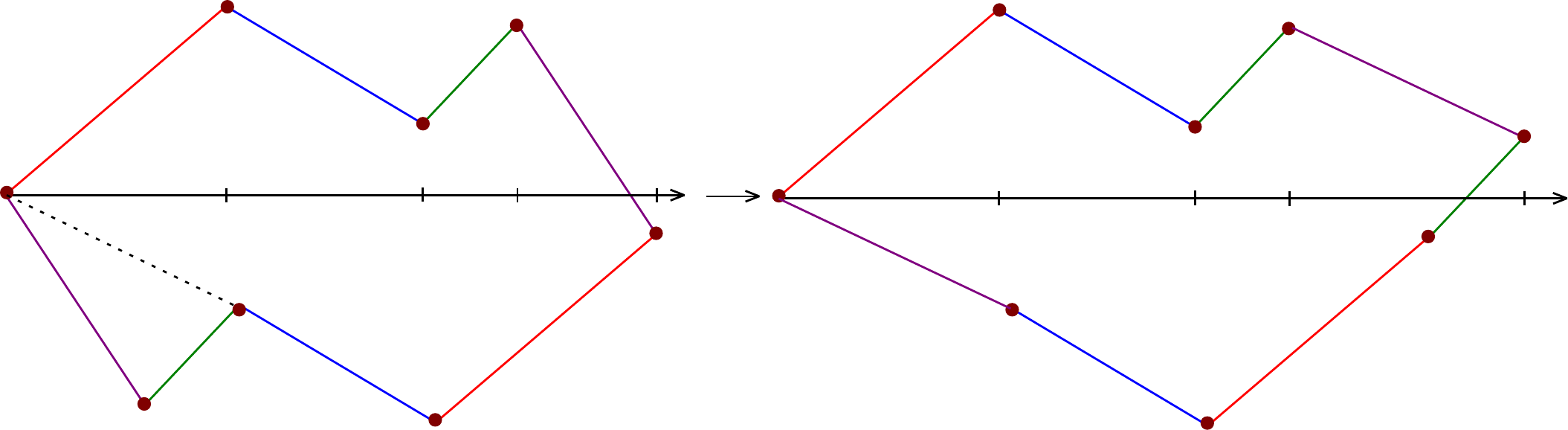}
	%		$\quad\quad\quad\quad$
	%		\includegraphics[scale=0.25]{drawing-1.pdf}
	\caption{A translation surface and one step of backward Rauzy-Veech induction. The parallel segments are identified via translation. The winning segment is the one which is first crossed by a rightward separatrix starting from $(0,0)$.}\label{SF}
\end{figure}

On a surface  $(\pi,\la,\tau)$ we consider a \emph{translation flow}, that is the flow which moves every non-singular point with unit speed in a fixed direction. In this note we mostly use the horizontal rightward direction and refer to such flows simply as \emph{``horizontal flows"} $\{T_t\}_{t\in\re}$. If the orbit of some point hits a singularity, then we call such an orbit a \emph{separatrix} of $\{T_t\}_{t\in\re}$. If the singularity is hit in negative time, then we say that the separatrix is \emph{rightward} and if it is hit in positive time, then we say that the separatrix is \emph{leftward}. A \emph{saddle connection} is a separatrix which is both rightward and leftward.  

The following fact concerning minimality, or rather its corollary (Cor. \ref{minorb}), would be of later use.
\begin{proposition}[see Theorem 3.13 in \cite{Viana}]
Every translation surface $(\pi,\la,\tau)$ admits a decomposition into finitely many maximal subsets $D_j$, $j=1,\ldots,k$, invariant under the action of horizontal flow, such that the restriction of the horizontal flow to $D_j$ for every $j=1,\ldots,k$ is either periodic or minimal.
\end{proposition}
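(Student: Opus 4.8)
\smallskip
\noindent\textbf{Proof sketch.} The plan is to peel off the part of the surface swept out by closed horizontal leaves, which will turn out to be a finite union of flat cylinders, and then to cut the complement along horizontal saddle connections into finitely many pieces on which the horizontal flow is minimal. The key preliminary observation is that the horizontal direction carries only finitely many separatrices: each point of $\Sigma$ has cone angle $2\pi(k+1)$ for some integer $k\ge 0$ and hence emits exactly $k+1$ rightward and $k+1$ leftward horizontal separatrices, and $\Sigma$ is finite. A horizontal saddle connection is determined by the singularity from which it issues together with the prong along which it leaves, so there are only finitely many horizontal saddle connections $\gamma_1,\dots,\gamma_m$.

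Next I would let $C\subseteq X$ be the set of points whose horizontal orbit is a closed loop disjoint from $\Sigma$. Flatness of the metric shows that a closed horizontal leaf of length $w$ has a neighbourhood isometric to a flat cylinder $(-\ep,\ep)\times(\re/w\z)$, all of whose horizontal leaves are closed of length $w$; hence $C$ is open and $\{T_t\}$-invariant, and each of its connected components is a maximal open flat cylinder, isometric to some $(0,h_i)\times(\re/w_i\z)$ and uniformly foliated by closed leaves. By maximality, each of the two boundary circles of such a cylinder passes through a singularity, so it is a cycle of saddle connections taken from $\gamma_1,\dots,\gamma_m$; since each of the two sides of each $\gamma_j$ borders at most one cylinder, there are only finitely many maximal cylinders (at most $2m$). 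Their closures will be the periodic pieces among the $D_j$.

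It then remains to deal with $Y:=\overline{X\setminus\overline C}$, which is compact and $\{T_t\}$-invariant, has boundary contained in $\gamma_1\cup\dots\cup\gamma_m$, and contains no closed regular horizontal leaf. I would cut $Y$ along every $\gamma_j$ that meets its interior, obtaining finitely many pieces, and claim that the horizontal flow on each of them is minimal. Given such a piece $V$, choose a short vertical transversal $J$ to the flow inside the interior of $V$, disjoint from the finitely many separatrices; deleting from $J$ the finitely many points whose forward orbit hits a singularity before returning, the first-return map becomes an interval exchange transformation $\widehat T$ on finitely many intervals. Since $V$ contains no closed regular leaf, $\widehat T$ has no periodic orbit, and since every horizontal saddle connection meeting $V$ has been removed, no forward orbit of a discontinuity of $\widehat T$ ever reaches another discontinuity. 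The classical dichotomy for interval exchange transformations---the absence of such a connection forces minimality---then gives that $\widehat T$ is minimal, whence $\{T_t\}$ is minimal on $V$. Since only finitely many arcs have been removed from $X$, this produces a finite decomposition, which completes the argument.

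The step I expect to be the genuine obstacle is the construction just sketched: one must exhibit a transversal for which the induced first-return map really is an interval exchange transformation with finitely many intervals and full-measure domain, and one must check that "$V$ has no closed regular leaf" and "$V$ has no interior horizontal saddle connection" translate, respectively, into "$\widehat T$ has no periodic orbit" and "$\widehat T$ has no connection between discontinuities". This last point is also what rules out orbits that merely wander inside $V$, which is the only behaviour other than minimality left open after the previous steps.
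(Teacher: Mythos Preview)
The paper does not prove this proposition at all: it is quoted from Viana's survey (Theorem~3.13 there) and used as a black box, so there is no ``paper's own proof'' to compare against. Your sketch is essentially the classical argument that one finds in Viana (and in the literature going back to Maier and Strebel): isolate the periodic part as a finite union of maximal flat cylinders bounded by saddle-connection cycles, then on each complementary component take a transverse section, read off an interval exchange transformation, and invoke Keane's criterion (no connection $\Rightarrow$ minimal). So in spirit you are reproducing exactly the cited proof.

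Two small points of precision. First, a vertical transversal $J$ cannot literally be ``disjoint from the finitely many separatrices'': if the flow on $V$ is to be minimal, every separatrix is dense and must cross $J$. What you want is that $J$ avoids $\Sigma$ and that its \emph{endpoints} are not on separatrices; that is enough to guarantee the first-return map is a genuine IET on finitely many intervals. Second, minimality of $\widehat T$ on $J$ gives density only for orbits that actually meet $J$; to conclude minimality of the flow on all of $V$ you still need that every bi-infinite orbit in $V$ intersects $J$. This follows from finite area (an orbit that never meets a full cross-section would have to stay in an invariant set of zero width), or, more concretely, by choosing $J$ to be a full vertical cross-section of $V$ built from the zippered-rectangle picture rather than an arbitrary short arc. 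Neither issue is a genuine obstacle, but both deserve a sentence if you write this out in full.
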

\begin{corollary}\label{minorb}
	The horizontal flow on $(\pi,\la,\tau)$ is minimal if and only if there exists one half-orbit which is dense.
\end{corollary}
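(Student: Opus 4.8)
The plan is to read the Corollary off the Proposition. The implication ``minimal $\Rightarrow$ some half-orbit is dense'' is the soft one. The translation surface is compact and there are only finitely many separatrices, so we may pick a point $x$ lying on none of them; then $\{T_tx:\ t\in\re\}$ is a genuine bi-infinite orbit, its $\omega$-limit set $\omega(x)$ is non-empty (by compactness), closed, and invariant under the horizontal flow, hence equals the whole surface by minimality. Since $\omega(x)$ is contained in the closure of the positive half-orbit of $x$, that half-orbit is already dense.

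For the converse, assume some half-orbit is dense. Reversing the time direction if necessary --- which merely replaces the horizontal flow by another translation flow, interchanging the roles of leftward and rightward separatrices, and does not affect which orbit closures are dense --- we may assume it is a positive half-orbit $O^{+}=\{T_tx:\ t\ge 0\}$. Being dense it is in particular infinite, so it is defined for all $t\ge 0$ and $x$ lies in one of the pieces $D_j$ of the decomposition provided by the Proposition. Each $D_j$ is flow-invariant, hence $O^{+}\subseteq D_j$, and therefore $D_j$ is dense in the surface. Now the pieces of the decomposition have pairwise disjoint non-empty interiors --- the periodic pieces are cylinders and the minimal pieces are non-degenerate subsurfaces whose boundary consists of horizontal saddle connections --- so a dense piece must be the only one; thus $k=1$ and $D_1$ is the whole surface. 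The restriction of the flow to $D_1$ cannot be periodic, for otherwise $O^{+}$ would be a single closed leaf, which is nowhere dense in the two-dimensional surface (here $\#\mathcal A\ge 2$ guarantees that the polygonal construction really produces a surface). Hence the flow on $D_1$, that is, the horizontal flow on $(\pi,\la,\tau)$, is minimal.

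The only real work lies in the second implication, and it is bookkeeping rather than a genuine obstacle: one has to be a little careful about points sitting on separatrices or on horizontal saddle connections, whose half-orbits are arcs rather than closed leaves, and about the exact (open versus closed) nature of the pieces $D_j$ and the fact that two distinct pieces meet at most along separatrices. All of this is controlled by the fine structure of the decomposition in the Proposition, together with the elementary fact that a dense set cannot be disjoint from a non-empty open set.
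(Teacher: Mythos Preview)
Your proposal is correct and follows exactly the route the paper intends: the Corollary is stated without proof, immediately after the decomposition Proposition, so the implied argument is precisely the one you give --- deduce it from the finite decomposition into periodic and minimal pieces. Your write-up supplies the details the paper omits, and the care you take in the last paragraph about separatrices, saddle connections, and the open/closed nature of the pieces $D_j$ is appropriate and matches the fine structure of the decomposition in Viana's Theorem~3.13.
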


We extend the definition of Rauzy-Veech induction into the space $S_0^{\mathcal A}\times \Lambda^{\mathcal A}\times \Theta^{\mathcal A}$. Namely $\mathbf{R}(\pi,\la,\tau):=(\pi^1,\la^1,\tau^1)$, where
\[
(\pi^1,\la^1)={R}(\pi,\la)\ \text{ and }\ \tau^1=A^1(\pi,\la)\tau.
\]
Thus, $\mathbf{R}(\pi,\la,\tau)$ is well defined if and only if $R(\pi,\la)$ is well defined. 
%In particular, if $\la^n_{\pi_0^{-1}(d)}=\la^n_{\pi_1^{-1}(d)}$ then the Rauzy-Veech induction stops.
We define the \emph{type} of  $(\pi,\la,\tau)$ as that of $(\pi,\la)$.

Note that $R$ is not a invertible map. Indeed, every $(\pi,\la)\in S_0^{\mathcal A}\times\Lambda^{\mathcal A}$ has exactly two preimages. The map $\mathbf{R}$ on the other hand is invertible whenever $(\pi,\la,\tau)$ satisfies $\sum_{\al\in\mathcal A}\tau_{\al}\neq0$. We can thus consider the \emph{backward Rauzy-Veech induction} $\mathbf{R^{-1}}$. We say that $(\pi,\la,\tau)$  is of the \emph{backward ``top" type} if  $\sum_{\al\in\mathcal A}\tau_{\al}<0$ and is of the \emph{backward ``bottom" type} if $\sum_{\al\in\mathcal A}\tau_{\al}>0$. 

Moreover, if  $(\pi,\la,\tau)$ is of backward "top" type, we say that $\pi_0^{-1}(d)$ is a \emph{backward winner} and $\pi_1^{-1}(d)$ is a \emph{backward loser}. Analogously if  $(\pi,\la,\tau)$ is of backward "bottom" type, we say that $\pi_1^{-1}(d)$ is a \emph{backward winner} and $\pi_0^{-1}(d)$ is a \emph{backward loser}. Although the following result is well known, we present its short proof for the sake of completeness.
\begin{lemma}
	We have that
	\begin{equation}\label{type}
	\begin{split}
	(\pi,\la,\tau)&\ \text{is of the ``backward top" (``backward bottom") type}\\ &\Leftrightarrow  \mathbf{R^{-1}}(\pi,\la,\tau)\ \text{is of ``top" (``bottom") type.}
	\end{split}
	\end{equation}
	Moreover
	\begin{equation}
	\begin{split}
	\al&\text{ is a backward winner of }\mathbf{R}^{-1}\text{ for }(\pi,\la,\tau)\\
	&\Leftrightarrow \al\text{ is a winner of  }\mathbf{R}\text{ for }\mathbf{R}^{-1}(\pi,\la,\tau).
	\end{split}
	\end{equation}
\end{lemma}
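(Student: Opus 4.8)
The plan is to unwind the definition of $\mathbf{R}^{-1}$ directly. Fix $(\pi,\la,\tau)$ with $\sum_{\al}\tau_\al\neq 0$ and write $(\bar\pi,\bar\la,\bar\tau):=\mathbf{R}^{-1}(\pi,\la,\tau)$, so that $\mathbf{R}(\bar\pi,\bar\la,\bar\tau)=(\pi,\la,\tau)$. By definition of $\mathbf{R}$ we have $\tau = A^1(\bar\pi,\bar\la)\,\bar\tau$. First I would recall the two cases for the shape of $A^1$: if $\mathbf{R}$ applied to $(\bar\pi,\bar\la,\bar\tau)$ is of ``top'' type, the winner is $w=\bar\pi_0^{-1}(d)$ and the loser is $l=\bar\pi_1^{-1}(d)$; if it is of ``bottom'' type, $w=\bar\pi_1^{-1}(d)$ and $l=\bar\pi_0^{-1}(d)$. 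In either case $A^1$ differs from the identity only by a single $-1$ in position $(w,l)$, hence $A^1$ acts on any vector by leaving all coordinates fixed except the $w$-th, from which it subtracts the $l$-th coordinate; and summing all coordinates of $A^1 v$ gives $\sum_\al v_\al - v_l$.

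The key computation is therefore $\sum_\al \tau_\al = \sum_\al \bar\tau_\al - \bar\tau_l$. Now I would invoke the standard description of how the permutation changes under one ``top''/``bottom'' Rauzy-Veech step together with the sign constraints defining $\Theta^{\mathcal A}=\Theta^{\mathcal A}(\bar\pi)$: in the ``top'' case the loser $l=\bar\pi_1^{-1}(d)$ is the symbol occupying the last position on the bottom row of $\bar\pi$, so the constraint $\sum_{\al:\bar\pi_1(\al)\le k}\bar\tau_\al<0$ with $k=d-1$ reads $\sum_\al \bar\tau_\al - \bar\tau_l<0$, i.e.\ $\sum_\al\tau_\al<0$; symmetrically, in the ``bottom'' case $l=\bar\pi_0^{-1}(d)$ is the last symbol on the top row, the constraint $\sum_{\al:\bar\pi_0(\al)\le k}\bar\tau_\al>0$ with $k=d-1$ gives $\sum_\al\bar\tau_\al-\bar\tau_l>0$, i.e.\ $\sum_\al\tau_\al>0$. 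This proves the forward implications in \eqref{type}; since the two ``backward type'' alternatives ($\sum\tau_\al<0$ versus $>0$) are mutually exclusive and exhaustive (we assumed the sum is nonzero), and likewise the ``top''/``bottom'' alternatives for $\mathbf{R}^{-1}(\pi,\la,\tau)$ are mutually exclusive and exhaustive, the equivalence follows automatically.

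For the second equivalence I would argue that both statements just reformulate the sign of $\sum_\al\tau_\al$. Indeed, by definition $\al$ is a backward winner of $\mathbf{R}^{-1}$ for $(\pi,\la,\tau)$ iff either $\sum\tau_\al<0$ and $\al=\pi_0^{-1}(d)$, or $\sum\tau_\al>0$ and $\al=\pi_1^{-1}(d)$. On the other hand, $\al$ is the winner of the (forward) step $\mathbf{R}$ applied to $(\bar\pi,\bar\la,\bar\tau)=\mathbf{R}^{-1}(\pi,\la,\tau)$ iff either that step is of ``top'' type and $\al=\bar\pi_0^{-1}(d)$, or it is of ``bottom'' type and $\al=\bar\pi_1^{-1}(d)$. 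Using the first part of the lemma to match the types, it only remains to check that the symbol in position $d$ of the relevant row is unchanged by the step: in a ``top'' step the winner $\bar\pi_0^{-1}(d)$ stays in the last top position, so $\pi_0^{-1}(d)=\bar\pi_0^{-1}(d)$; in a ``bottom'' step $\bar\pi_1^{-1}(d)$ stays in the last bottom position, so $\pi_1^{-1}(d)=\bar\pi_1^{-1}(d)$. These are exactly the standard facts about which entries a Rauzy move fixes, and the two definitions then coincide symbol-for-symbol.

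I do not expect any serious obstacle here; the only thing to be careful about is bookkeeping — keeping straight which row ($\bar\pi_0$ or $\bar\pi_1$) carries the loser in each type, and confirming that the winner symbol remains in the $d$-th slot of its row after the move so that reading it off from $\pi$ versus from $\bar\pi$ gives the same symbol. This is precisely where the explicit forms of the Rauzy ``top'' and ``bottom'' operations on permutations are used, and it is entirely combinatorial.
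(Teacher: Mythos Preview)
Your argument is correct. The paper's proof is shorter and takes a slightly different tack: instead of working with $\tau$ and the sign constraints defining $\Theta^{\mathcal A}(\bar\pi)$, it works on the $\lambda$ side. Assuming $(\pi,\lambda,\tau)$ is of backward top type with backward winner $\alpha=\pi_0^{-1}(d)$, it names $\beta$ as the symbol that will sit in the last bottom slot of the preimage, writes out $\lambda^{-1}_\alpha=\lambda_\alpha+\lambda_\beta>\lambda_\beta=\lambda^{-1}_\beta$, and reads off directly that $\mathbf{R}^{-1}(\pi,\lambda,\tau)$ is of forward top type with winner $\alpha$; the bottom case is declared analogous. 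Your route---computing $\sum_\alpha\tau_\alpha=\sum_\alpha\bar\tau_\alpha-\bar\tau_l$ and invoking the $k=d-1$ inequality from $\Theta^{\mathcal A}(\bar\pi)$---is the dual computation and is arguably cleaner, since the backward type is \emph{defined} through the sign of $\sum_\alpha\tau_\alpha$, and your exhaustion argument makes the biconditional explicit rather than implicit. For the second equivalence both you and the paper rely on the same combinatorial fact, namely that the winner keeps the $d$-th slot on its own row under a Rauzy move.
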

\begin{proof}
	Assume that  $(\pi,\la,\tau)$ is of backward top type and $\al$ is a backward winner that is $\al=\pi_0^{-1}(d)$ (the backward bottom type case is done analogously). Denote also $\beta= (\pi_1)^{-1}(\pi_1(\al)+1)$. Then $\pi_0^{-1}(\al)=\pi_0(\al)=\#\mathcal A$ and $\pi_1(\beta)=\mathcal A$. Moreover
	\[
	\la^{-1}_\al=\la_\al+\la_\beta>\la_\beta=\la^{-1}_\beta.
	\]
	Thus $(\pi,\la,\tau)$ is of (forward) top type and $\al$ is the winner.
	\end{proof}
\begin{uw}\label{hitby0}
Note that $\al\in\mathcal A$ is a backward winner iff the segment corresponding to $\al$ is the first  segment hit by the rightward separatrix starting from the point $(0,0)$ in the polygonal representation of $(\pi,\la,\tau)$ (see Figure \ref{SF}).
	\end{uw}

It is  easy to see that $\mathbf{R}^{-n}(\pi,\la,\tau)$ is properly defined for every $n\in\n$ if $\tau$ is a rationally independent vector. In particular this together with Lemma \ref{orb0} implies that in a surface $(\pi,\la,\tau)$ rightward separatrix starting from $(0,0)$ is not a saddle connection. In Lemma \ref{exist} we show that in order to define an infinite orbit of a backward Rauzy-Veech induction, the condition on rational independence can be significantly weakened.

We define a Rauzy-Veech matrix associated to $\mathbf{R}^{-1}$ at point $(\pi,\la,\tau)$ by
\[
A^{-1}(\pi,\la,\tau):=\left(A^1\left(\mathbf{R}^{-1}(\pi,\la,\tau) \right)\right)^{-1},
\]
and analogously as in the forward case for every $n\in\n$ we define
\[
A^{-n}(\pi,\la,\tau):=\left(A^n\left(\mathbf{R}^{-n}(\pi,\la,\tau) \right)\right)^{-1},
\]
whenever $\mathbf{R}^{-n}$ is properly defined. If $\mathbf{R}^{-n}(\pi,\la,\tau)=(\pi^{-n},\la^{-n},\tau^{-n})$ then we have
\begin{equation}\label{matback}
\la^{-n}=A^{-n}(\pi,\la,\tau)\la \quad\text{and}\quad \tau^{-n}=A^{-n}(\pi,\la,\tau)\tau.
\end{equation}

If $\sum_{\al\in\mathcal A}\tau^{-n}_\al=0$ for some $n\in\n$ then the backward Rauzy-Veech induction stops, i.e. $\mathbf R^{-n-1}(\pi,\la,\tau)$ is not well defined. If on the other hand $\mathbf R^{-n}(\pi,\la,\tau)$ is well defined for every $n\in\n$ and each symbol is a backward winner infinitely many times then we say that \emph{$(\pi,\la,\tau)$ has $\infty$-complete backward Rauzy-Veech induction orbit}.

Note that since $(A^n(\pi^{-n},\la^{-n}))^{-1}$ is non-negative, we have
\begin{equation}
A^{-n}(\pi,\la,\tau)\quad\text{is non-negative for every}\ n\in\n.
\end{equation}
In particular 
\begin{equation}\label{nondec}
\min_{\al,\beta\in\mathcal A}A_{\al\beta}^{-n}(\pi,\la,\tau)\quad\text{is non-decreasing as}\ n\to\infty.
\end{equation}
In particular 
\begin{equation}\label{lalength}
\lim_{n\to\infty}|\la^{-n}|=\infty.
\end{equation}

The following result is stated as a Remark 4.2 in \cite{MUY}, however, due to its importance in this article, we present its short proof.
\begin{lemma}\label{orb0}
	The backward Rauzy Veech induction is defined indefinitely on $(\pi,\la,\tau)$ if and only if the horizontal rightwards separatrix starting from point $(0,0)$ in $(\pi,\la,\tau)$ is infinite, i.e. it is not a horizontal saddle connection. 
\end{lemma}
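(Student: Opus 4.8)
The plan is to turn the statement into one about the geometry of a single polygonal representation, using the standard fact (see \cite{Viana}, \cite{Yoccoz}) that one step of $\mathbf{R}^{-1}$, when defined, does not alter the translation surface: $\mathbf{R}^{-1}(\pi,\la,\tau)$ is another polygon for the same surface, obtained by a cut-and-paste near the rightmost vertex that lengthens the side of the backward winner by a copy of the side of the backward loser and modifies the permutation accordingly (cf.\ Figure \ref{SF}). Consequently the horizontal rightward separatrix $\sigma$ starting at the singular point $(0,0)$ is one and the same object on every polygon $\mathbf{R}^{-n}(\pi,\la,\tau)$. First I would record two elementary facts about a single polygon $(\pi,\la,\tau)$, read off directly from the definition of $\Theta^{\mathcal A}$: each vertex $a(\pi,\la,\tau,k)$ with $1\le k\le d-1$ lies strictly above the real axis and each $b(\pi,\la,\tau,k)$ with $1\le k\le d-1$ strictly below it, so that no vertex other than $0$ and possibly the rightmost one $a(\pi,\la,\tau,d)=b(\pi,\la,\tau,d)=(|\la|,\sum_{\al}\tau_\al)$ lies on the real axis, and the rightmost one does lie on it exactly when $\sum_{\al}\tau_\al=0$. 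Hence the ray issued from $(0,0)$ in the positive real direction enters the interior of the polygon and runs straight until its first return to the boundary: this first return is at the rightmost vertex, necessarily at time $|\la|$, if $\sum_{\al}\tau_\al=0$, and otherwise it is a transversal crossing of the side corresponding to the backward winner (the last side of the relevant broken line, by Remark \ref{hitby0}).

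The technical core is the claim that \emph{one backward step strictly decreases the number of times the separatrix crosses the polygon}: if $\sigma$ is a horizontal saddle connection, of length $T$ say, and $\mathbf{R}^{-1}(\pi,\la,\tau)$ is defined, then the number of transversal crossings of the arc $\sigma|_{[0,T]}$ with the sides of $\mathbf{R}^{-1}(\pi,\la,\tau)$ is at least one less than with the sides of $(\pi,\la,\tau)$. I would prove this from the explicit repolygonation: in, say, the backward ``top'' case the backward winner is $\al=\pi_0^{-1}(d)$ and the backward loser is $\beta=\pi_1^{-1}(d)$, and passing to $\mathbf{R}^{-1}(\pi,\la,\tau)$ removes the identification between the two rightmost sides and reglues so that the side of type $\al$ is lengthened; since, by the first paragraph and Remark \ref{hitby0}, $\sigma$'s first straight piece ends precisely on the side of type $\al$ and uses exactly that identification to continue, in the enlarged polygon $\sigma$ runs straight through that former crossing point, so that crossing is absorbed. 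Keeping track of the gluing data of $(\pi,\la,\tau)$ and $\mathbf{R}^{-1}(\pi,\la,\tau)$ and verifying that no new crossings are created is the step I expect to be the main obstacle.

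Granting the claim, both implications are short. If $\sigma$ is infinite, i.e.\ not a horizontal saddle connection, I would argue by induction that $\mathbf{R}^{-n}$ is defined for all $n$: if $\mathbf{R}^{-n}(\pi,\la,\tau)=(\pi^{-n},\la^{-n},\tau^{-n})$ is defined then $\sum_{\al}\tau^{-n}_\al\neq0$, since $\sum_{\al}\tau^{-n}_\al=0$ would make $\sigma$ hit the rightmost vertex of that polygon at the positive time $|\la^{-n}|$, contradicting that $\sigma$ is not a saddle connection; hence $\mathbf{R}^{-n-1}$ is defined. Conversely, assume $\sigma$ is a horizontal saddle connection of length $T$, and for each $n$ for which $\mathbf{R}^{-n}(\pi,\la,\tau)$ is defined let $k_n$ be the (finite) number of transversal crossings of $\sigma|_{[0,T]}$ with its sides. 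If $k_n\ge1$ then $\sum_{\al}\tau^{-n}_\al\neq0$ — for $\sum_{\al}\tau^{-n}_\al=0$ would force $\sigma|_{[0,T]}$ to be a single straight chord from $(0,0)$ to the rightmost vertex, giving $k_n=0$ — so $\mathbf{R}^{-n-1}$ is defined and, by the claim, $k_{n+1}\le k_n-1$. Thus after at most $k_0$ steps we reach $N$ with $\mathbf{R}^{-N}(\pi,\la,\tau)$ defined and $k_N=0$; then $\sigma|_{[0,T]}$ is a single horizontal chord from $(0,0)$ to a singular vertex lying on the real axis, which by the first paragraph can only be the rightmost vertex, so $T=|\la^{-N}|$ and $\sum_{\al}\tau^{-N}_\al=0$, and therefore $\mathbf{R}^{-N-1}$ is not defined.
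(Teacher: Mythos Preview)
Your argument is correct and the two implications follow as you wrote them. The crossing-count claim you flag as the main obstacle is true: one backward step is exactly the cut-and-paste that reglues along the side carrying the first crossing of $\sigma$ (this is Remark~\ref{hitby0} and Figure~\ref{SF}), so that crossing is absorbed into a longer straight first piece, and the only side that changes as an arc on the surface is the one you have just reglued along, so no new crossings can appear. Once this is checked, $k_n$ is a strictly decreasing nonnegative integer while it stays positive, and your termination argument goes through.

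The paper's own proof of the harder direction (saddle connection $\Rightarrow$ backward orbit terminates) is different and shorter. It argues by contradiction using the already established growth fact \eqref{lalength}, namely $|\la^{-n}|\to\infty$: once $|\la^{-n'}|$ exceeds the length $\ell$ of the saddle connection, the horizontal segment from $(0,0)$ of length $\ell$ lies inside the polygon $(\pi^{-n'},\la^{-n'},\tau^{-n'})$, so its right endpoint is an interior point and cannot be a singularity. Your approach, by contrast, does not invoke \eqref{lalength} at all and is more constructive: it shows the algorithm halts within at most $k_0$ steps, where $k_0$ is the initial crossing count. The paper's route is quicker given the preliminaries; yours is more self-contained and gives a quantitative stopping time.
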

\begin{proof}
	If $(\pi^{-n'},\la^{-n'},\tau^{-n'})$ is not properly defined for some $n'\in\n$ then $\sum_{\al\in\mathcal A}\tau_\al^{-n'+1}=0$ and the horizontal interval with endpoints at $(0,0)$ and $(\sum_{\al\in\mathcal A}\la_\al^{-n'+1},0)$ is a horizontal saddle connection. 
	
	On the other hand if there is a horizontal saddle connection starting at $(0,0)$ of length $\ell>0$ and $(\pi^{-n},\la^{-n},\tau^{-n})$ is well defined for every $n\in\n$, then by \eqref{lalength} there exists $n'\in\n$ such that $\sum_{\al\in\mathcal A}\la_\al^{n'}>\ell$. This implies however that the whole saddle connection is in the interior of $(\pi^{-n'},\la^{-n'},\tau^{-n'})$ seen as a polygon, in particular this applies to the right-hand side endpoint of the saddle connection. This is however a contradiction since the singularities of $(\pi,\la,\tau)$ can be only the vertices and they do not belong to the horizontal line.
\end{proof}

The surface $(\pi,\la,\tau)\in S_0^{\mathcal A}\times \Lambda^{\mathcal{A}}\times \Theta^{\mathcal A}$ is  alternatively considered via \emph{zippered rectangles} representation $(\pi,\la,h)\in S_0^{\mathcal A}\times \Lambda^{\mathcal A}\times \re_{>0}^{\mathcal A}$, that is one considers a Poincar\'e return map of the vertical translation flow to the rightward separatrix segment of length $|\la|$, originating from the point $(0,0)$. Then the first return map is an interval exchange transformation $(\pi,\la)$ and the first return times are constant on each exchanged interval and given by the \emph{height vector}  $h=\Omega_{\pi}^T\tau$. Then a \emph{ rectangle} associated to the symbol $\al\in\mathcal A$ is the set \[\left[\sum_{\al\in\mathcal A;\ \pi_0(\beta)<\pi_0(\al)}\la_\beta,\sum_{\al\in\mathcal A;\ \pi_0(\beta)\le\pi_0(\al)}\la_\beta\right)\times [0,h_\al).\] 

Moreover, the sides of rectangles are divided into parts and identified with each other with a proper rearrangement. The segments which are identified are referred to as ``zips". The rectangles together with the zips form a \emph{zippered rectangles} representation of a surface (see Figure \ref{zip}).

One can prove by a simple induction that the points  in $\Sigma$  always belong to the left-hand edge of the  rectangles, that is in the sets of the form $\{\sum_{\al\in\mathcal A;\ \pi_0(\beta)<\al}\la_\beta\}\times [0,h_\al)$ for every $\al\in\mathcal A$.

			\begin{figure}
	\includegraphics[scale=0.5]{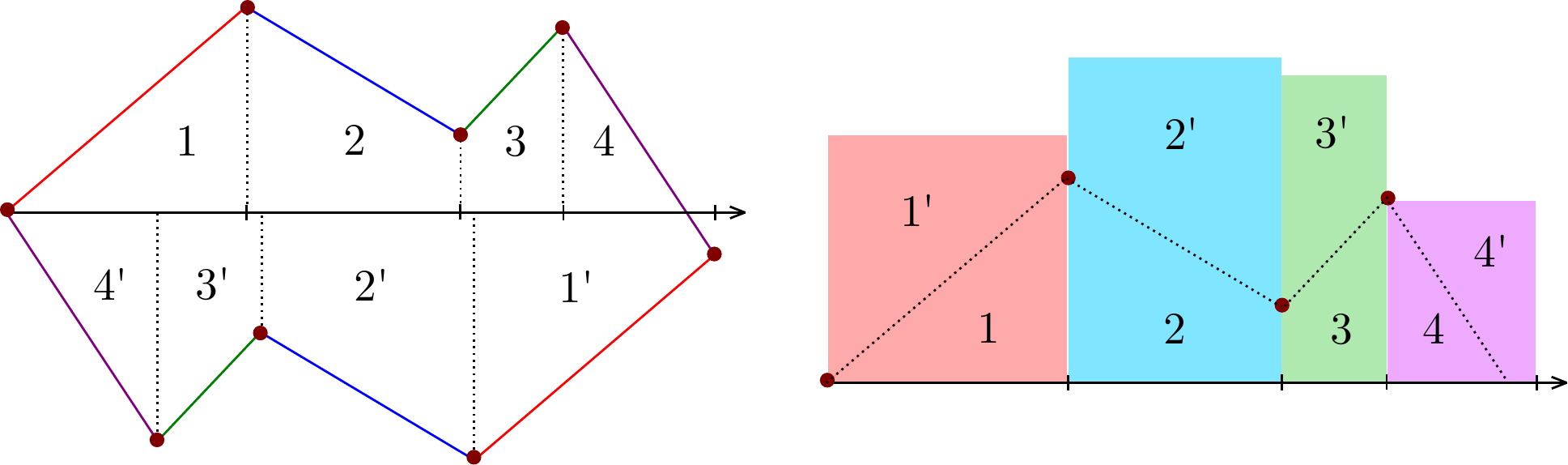}
	%		$\quad\quad\quad\quad$
	%		\includegraphics[scale=0.25]{drawing-1.pdf}
	\caption{A polygonal and a zippered rectangle representation of a surface.}\label{zip}
\end{figure}

It is possible to obtain zippered rectangles by dividing the polygonal construction into smaller polygons and rearranging them. Hence each rectangle properly locally parametrizes the surface $(\pi,\la,\tau)$. For every $n\in\z$ we denote $h^n=\Omega_{\pi^n}\tau^n$, that is  rectangle height vector corresponding to the surface $(\pi^n,\la^n,\tau^n) $ obtained by $n$ steps of Rauzy-Veech induction.

%Let also
%\[
%\Lambda^{\mathcal A}:=\{\la\in\re_{>0}^{\mathcal A};\ \sum_{\al\in\mathcal A}\la_\al=1 \}
%\]
%be a $d-1$-dimensional unit simplex. 
\section{The existence of the backward orbit}
Note that the backward Rauzy-Veech induction algorithm stops  the orbit of $(\pi,\la,\tau)\in \{\pi\}\times \Lambda^{\mathcal A}\times \Theta^{\mathcal A}$ when $\sum_{\al\in\mathcal A}\tau_\al=0$. This implies that there is a horizontal connection between 0 and $\sum_{\al\in\mathcal A}\la_\al+i\tau_\al$ and that the latter  is also an element of $\Sigma$ (see Lemma \ref{orb0}). The following result states that the existence of horizontal saddle connections does not imply that the Rauzy-Veech induction orbit stops.

\begin{proposition}\label{exist1}
	There exist translation surfaces with horizontal saddle connections, whose orbit under the action of backward Rauzy-Veech induction is defined indefinitely.
\end{proposition}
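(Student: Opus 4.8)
The plan is to read Proposition~\ref{exist1} through Lemma~\ref{orb0}: the backward Rauzy--Veech orbit of $(\pi,\la,\tau)$ is defined for all $n\in\n$ exactly when the rightward horizontal separatrix issued from $(0,0)$ is infinite, i.e.\ never meets a singularity in finite positive time. So what I must produce is a single translation surface on which this separatrix is infinite and yet \emph{some other} horizontal saddle connection is present. I would organise the construction around the decomposition of the horizontal flow into periodic and minimal components (the Proposition preceding Corollary~\ref{minorb}).

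First I note which configurations are forced. If the horizontal flow is minimal there are no horizontal saddle connections at all (a saddle connection is a bounded orbit arc, which cannot be dense), so that case is useless. If the flow is totally periodic, the separatrix issued from the singularity $(0,0)$ lies in a periodic component, hence closes up and returns to a singularity in finite positive time; it is then a saddle connection and, by Lemma~\ref{orb0}, the backward orbit stops. Therefore the example must have a \emph{proper} minimal component $D$, with $(0,0)$ in $\overline D$ and the rightward separatrix issued from $(0,0)$ entering $D$. I would realise this in genus two by inserting a horizontal cylinder $C$ into a ``minimal piece'': choose the length data so that the first return of the horizontal flow to a vertical transversal of the complement of $C$ is a minimal interval exchange (take rationally independent lengths there), while keeping the combinatorial gluing that makes $C$ a closed cylinder; equivalently, start from a totally periodic surface and perturb all parameters transverse to the subvariety that keeps one cylinder closed. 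Finally I would pick the polygonal representation so that the leftmost vertex $(0,0)$ lies in $\overline D$ with its rightward separatrix going into $D$ --- most simply, so that $(0,0)$ is a marked point of the stratum lying in the interior of the minimal component, in which case the orbit through it is the unique horizontal orbit through that point.

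On such a surface both assertions are immediate. The boundary $\partial D$ is non-empty, because $D$ is a proper component, and it consists of horizontal saddle connections, so the surface has horizontal saddle connections. On the other hand the rightward separatrix from $(0,0)$ is contained in the flow-invariant set $D$, on which the horizontal flow is minimal, hence this separatrix is dense in the positive-area set $D$; in particular it contains infinitely many distinct points and never meets a singularity in finite positive time. By Lemma~\ref{orb0} the backward Rauzy--Veech induction on $(\pi,\la,\tau)$ is then defined for every $n\in\n$, which proves Proposition~\ref{exist1}. I expect the only genuine work --- and the main obstacle --- to be the construction of the previous paragraph: exhibiting, explicitly or by a codimension count, a translation surface whose horizontal flow has a proper minimal component whose closure contains $(0,0)$ and whose bounding separatrix from $(0,0)$ enters the component (and is not itself one of the boundary saddle connections). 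Once such a surface is in hand, the dichotomy ``periodic vs.\ minimal component'' and Lemma~\ref{orb0} do the rest.
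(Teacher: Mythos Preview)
Your approach is genuinely different from the paper's and conceptually sound, but the proof is not actually carried out: you correctly identify that the entire content lies in producing the surface, and then only sketch how one might do so. The paper's route is far more elementary. It first proves a purely arithmetic criterion (Lemma~\ref{exist}): if some coordinate $\tau_\beta$ is rationally independent of the remaining $\tau_\al$'s, then $\sum_{\al}\tau_\al^{-n}$ can never vanish, because the backward Rauzy--Veech matrices $A^{-n}$ are nonnegative integer matrices with $A^{-n}_{\beta\beta}\ge 1$, so the $\tau_\beta$-coefficient in $\sum_\al\tau_\al^{-n}=\sum_\al(\sum_\g A^{-n}_{\g\al})\tau_\al$ is a positive integer. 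Hence the backward orbit is defined for all $n$. With this in hand the example is one line: take any irreducible $\pi$ on $\ge 3$ letters, set $\tau_{\pi_0^{-1}(1)}$ irrational and all other $\tau_\g$ rational with $\tau_{\pi_0^{-1}(2)}=-\tau_{\pi_0^{-1}(3)}>0$; Lemma~\ref{exist} applies with $\beta=\pi_0^{-1}(1)$, while the segment from $a(\pi,\la,\tau,1)$ to $a(\pi,\la,\tau,3)$ is a horizontal saddle connection. No analysis of minimal components, no placement of $(0,0)$, no irreducibility or $\Theta^{\mathcal A}$-compatibility checks beyond the obvious.

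What your approach would buy, if completed, is a direct link to the dynamics that Theorem~\ref{min} later makes precise: you are essentially anticipating the picture ``the separatrix from $(0,0)$ lives in a minimal component, and the saddle connections bound it''. But as a proof of Proposition~\ref{exist1} this is overkill, and the parts you leave open --- exhibiting the surface inside the parameter space $S_0^{\mathcal A}\times\Lambda^{\mathcal A}\times\Theta^{\mathcal A}$ with irreducible $\pi$ and $\tau\in\Theta^{\mathcal A}(\pi)$, and arranging that the leftmost vertex $(0,0)$ has its rightward separatrix entering the minimal component rather than running along a boundary saddle connection --- are precisely where all the work sits. The paper's arithmetic criterion sidesteps every one of these issues.
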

To prove the above result we will now see that, in general, it is enough to pick $\tau_\beta$ for some $\beta\in\mathcal A$ appropriately for the iterations of backward Rauzy-Veech induction on $(\pi,\la,\tau)$ to be defined infinitely many times. 
\begin{lemma}\label{exist}
	Let $\beta\in\mathcal A$. Assume that $\{\tau_\al\}_{\al\in\mathcal A}$ is such that for every choice of integer numbers $c_{\al}\in \mathbb{Z},\ \al\in\mathcal A$ we have
	\[
	\sum_{\al\in\mathcal A}c_\al\tau_\al=0\ \Rightarrow\ c_\beta=0.
	\]
	Then the backward Rauzy-Veech induction iterates are defined indefinitely. 
\end{lemma}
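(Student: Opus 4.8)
The plan is to show by induction on $n$ that $\mathbf{R}^{-n}(\pi,\la,\tau)$ is well defined. Recall from the discussion above that the backward induction can fail to continue past step $n$ only if $\sum_{\al\in\mathcal A}\tau^{-n}_\al=0$. So assume $\mathbf{R}^{-n}(\pi,\la,\tau)$ is defined; it suffices to rule out $\sum_{\al\in\mathcal A}\tau^{-n}_\al=0$. The idea is to rewrite this sum as an explicit integer combination of the coordinates of the \emph{original} $\tau$. Using \eqref{matback},
\[
\sum_{\al\in\mathcal A}\tau^{-n}_\al=\sum_{\al\in\mathcal A}\sum_{\g\in\mathcal A}A^{-n}_{\al\g}(\pi,\la,\tau)\,\tau_\g=\sum_{\g\in\mathcal A}c_\g\,\tau_\g,\qquad c_\g:=\sum_{\al\in\mathcal A}A^{-n}_{\al\g}(\pi,\la,\tau).
\]
Since each forward matrix $A^1$ equals the identity minus an off-diagonal elementary matrix, it has integer entries and determinant $1$; hence so does $A^n(\mathbf{R}^{-n}(\pi,\la,\tau))$, and therefore $A^{-n}(\pi,\la,\tau)$ is an integer matrix, which is moreover non-negative by the discussion preceding the lemma. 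Thus the column sums $c_\g$ are non-negative integers.

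The crucial point is then that $c_\beta\ge 1$. Indeed $A^{-n}(\pi,\la,\tau)=(A^n(\mathbf{R}^{-n}(\pi,\la,\tau)))^{-1}$ is a product of matrices of the form $I+E_{wl}$ with $w\ne l$, each of which has $1$'s on the diagonal and non-negative entries elsewhere; such a product has all its diagonal entries at least $1$, so $c_\beta\ge A^{-n}_{\beta\beta}(\pi,\la,\tau)\ge 1$. Were $\sum_{\al\in\mathcal A}\tau^{-n}_\al=0$, the displayed identity would yield an integer relation $\sum_{\g\in\mathcal A}c_\g\tau_\g=0$ with $c_\beta\ne 0$, contradicting the hypothesis on $\{\tau_\al\}_{\al\in\mathcal A}$. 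Hence $\sum_{\al\in\mathcal A}\tau^{-n}_\al\ne 0$ and $\mathbf{R}^{-n-1}(\pi,\la,\tau)$ is defined, which closes the induction; the base case $n=0$ is just the statement $\sum_{\al\in\mathcal A}\tau_\al\ne 0$, i.e.\ the hypothesis applied with all $c_\al=1$.

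The argument is short, and I do not anticipate a genuine obstacle: the only slightly substantive step is the inequality $c_\beta\ge 1$, that is, the fact that column $\beta$ of $A^{-n}$ does not vanish. This follows either from the diagonal estimate above or simply from invertibility of $A^{-n}$. Everything else is bookkeeping with Rauzy-Veech matrices already recorded in the preliminaries, and it is exactly this observation that shows the rational independence of $\tau$ used earlier can be weakened to the single non-degeneracy condition on $\tau_\beta$.
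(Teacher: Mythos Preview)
Your proof is correct and follows essentially the same approach as the paper: both argue by induction, write $\sum_\al\tau_\al^{-n}$ as $\sum_\g c_\g\tau_\g$ with $c_\g$ the column sums of $A^{-n}$, and use $A^{-n}_{\beta\beta}\ge 1$ together with non-negativity to get $c_\beta\ge 1$, contradicting the hypothesis. You are a bit more explicit about why the $c_\g$ are integers and note the alternative justification via invertibility of $A^{-n}$, but the argument is the same.
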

\begin{proof}
	Note first that by assumptions taking $c_\al=1$, $\al\in\mathcal A$, we have
	\[
\sum_{\al\in\mathcal A}\tau_\al\neq 0.
	\]
	Thus $(\pi^{-1},\la^{-1},\tau^{-1})$ is properly defined. 
	
	We proceed by induction. Assume that $(\pi^{-n},\la^{-n},\tau^{-n})$ for $n\ge 1$ is properly defined and let $B=A^{-n}(\pi,\la,\tau)$ be a Rauzy-Veech matrix of the $n$ backward steps of induction. Then in particular $B$ is a non-negative matrix and $B_{\beta\beta}\ge 1$. Indeed, the coefficients of the Rauzy-Veech matrix are non-decreasing (see \eqref{nondec}) and $A^{-1}(\pi,\la,\tau)$ has ones on the diagonal. 
	
	We claim that $(\pi^{-n-1},\la^{-n-1},\tau^{-n-1})$ is properly defined. Assume otherwise, that is 
	\begin{equation}\label{tau0}
	\sum_{\al\in\mathcal A}\tau_{\al}^{-n}=0.
	\end{equation}
	Since $\tau^{-n}=B\tau$, we have
	\[
	\sum_{\al\in\mathcal A} c_\al\tau_\al=0,
	\]
	where $c_\al=\sum_{\g\in\mathcal A}B_{\g\al}$. In particular, since $B_{\beta\beta}\ge 1$ and $B$ is a non-negative matrix, we have $c_{\beta}>0$. This, together with \eqref{tau0}, yields a contradiction with the assumption of the lemma.
	
		\end{proof}
	\begin{figure}
		\includegraphics[scale=0.5]{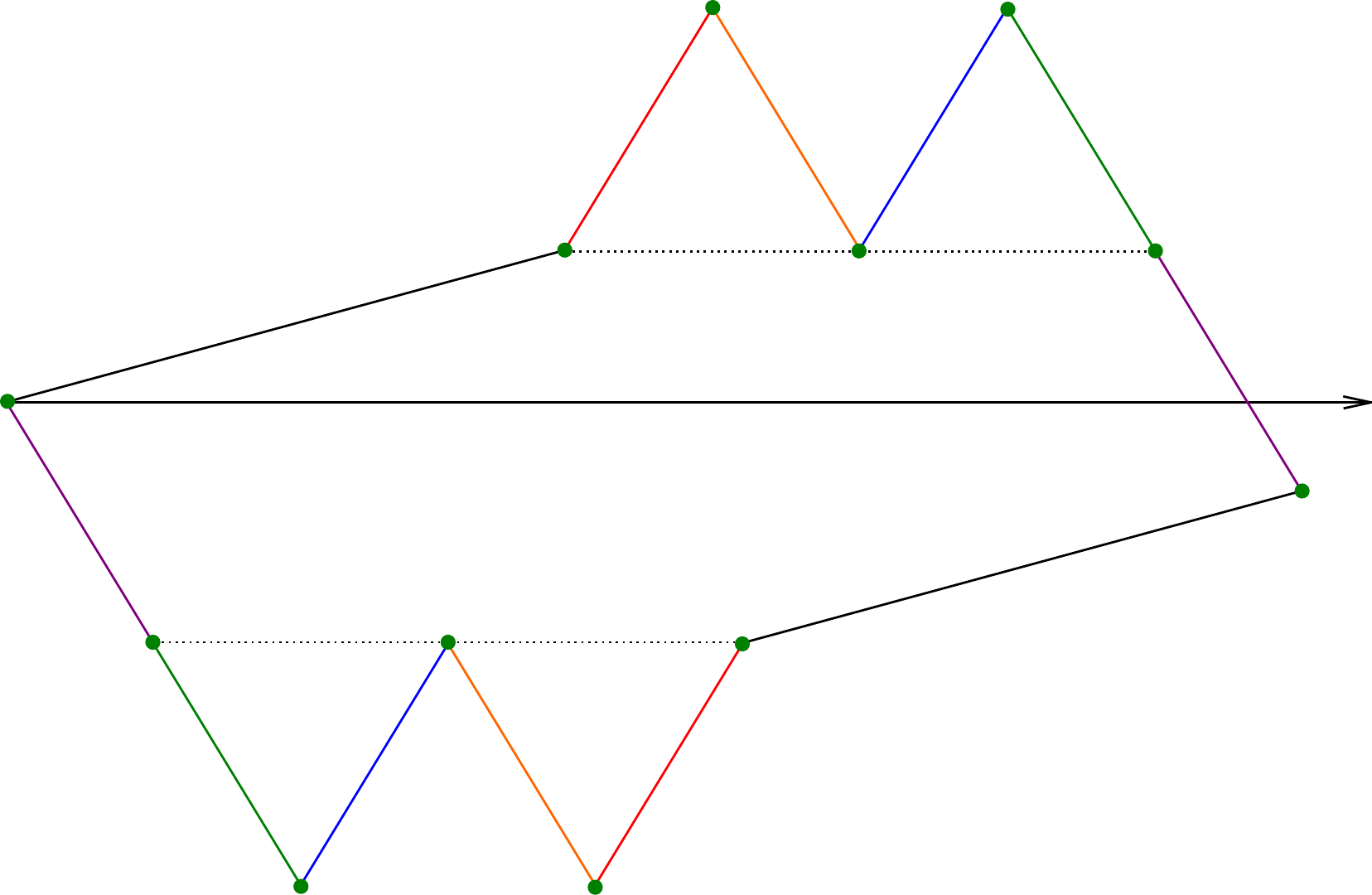}
		%		$\quad\quad\quad\quad$
		%		\includegraphics[scale=0.25]{drawing-1.pdf}
		\caption{An example of a surface with horizontal saddle connections (dotted lines) for which backward Rauzy-Veech induction orbit is well defined - all segments besides the black segment are identical up to reflection and the vertical coordinates of the black segment and the remaining segments are rationally independent.}\label{saddledef}
	\end{figure}
\begin{proof}[Proof of Proposition \ref{exist1}]
	Consider $(\pi,\la,\tau)\in\{\pi\}\times \Lambda^{\mathcal A}\times\Theta^{\mathcal A}$ where 
	\begin{enumerate}
		\item $\tau_{\pi_0^{-1}(1)}\in\re\setminus\mathbb{Q}$;
		\item $\tau_{\g}\in\mathbb{Q}$ for $\g\neq \pi_0^{-1}(1)$;
		\item $\tau_{\pi_0^{-1}(2)}=-\tau_{\pi_0^{-1}(3)}>0$.
	\end{enumerate}
	Then $(\pi,\la,\tau)$ satisfies assumptions of Proposition \ref{exist} with $\beta=\pi_0^{-1}(1)$, but the segment \[[\la_{\pi_0^{-1}(1)}+i\tau_{\pi_0^{-1}(1)},\sum_{i=1,2,3}\la_{\pi_0^{-1}(i)}+i\tau_{\pi_0^{-1}(i)}]\]
	is a horizontal saddle connection (see Figure \ref{saddledef}).
	\end{proof}

\section{Horizontal connections prevent $\infty$-completeness}
We saw in Corollary \ref{exist1} that a horizontal saddle connection does not necessarily prevent a proper definition of an infinite backward Rauzy-Veech orbit. We shall see that it does prevent $\infty$-completeness. 
As the reader will see, it  follows from the proof of Theorem \ref{noinfty} that horizontal saddle connections ``freeze" some coordinates in the sense that they stop winning after finite number of steps of backward Rauzy-Veech algorithm. Before proving Theorem \ref{noinfty}, we present an easy condition to prevent a symbol from winning.
\begin{lemma}\label{win0}
	Let $(\pi,\la,\tau)\in S_0^{\mathcal A}\times\Lambda^{\mathcal A}\times \Theta^{\mathcal A}$. If $\tau_\beta=0$ for some $\beta\in\mathcal A$ then $\beta$ cannot be a backward winner. 
\end{lemma}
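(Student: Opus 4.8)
The plan is to argue by contradiction, working directly from the definition of a backward winner together with the inequalities that define $\Theta^{\mathcal A}(\pi)$. Suppose that $\tau_\beta=0$ and yet $\beta$ is a backward winner. By definition this forces $(\pi,\la,\tau)$ to be of backward ``top'' type or of backward ``bottom'' type (in particular $\sum_{\al\in\mathcal A}\tau_\al\neq 0$), so I would split into these two cases. Throughout I will use that $\#\mathcal A=d\ge 2$, so that $k:=d-1$ is an admissible index in $\{1,\dots,d-1\}$ and the defining inequalities of $\Theta^{\mathcal A}(\pi)$ may be applied with this $k$.

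In the backward ``top'' case we have $\sum_{\al\in\mathcal A}\tau_\al<0$ and, by definition of the backward winner, $\beta=\pi_0^{-1}(d)$, i.e. $\pi_0(\beta)=d$. The inequality $\sum_{\al\in\mathcal A;\,\pi_0(\al)\le d-1}\tau_\al>0$ coming from $\tau\in\Theta^{\mathcal A}(\pi)$ then reads $\sum_{\al\in\mathcal A}\tau_\al-\tau_\beta>0$, hence $\sum_{\al\in\mathcal A}\tau_\al>\tau_\beta=0$, contradicting $\sum_{\al\in\mathcal A}\tau_\al<0$. The backward ``bottom'' case is symmetric: there $\sum_{\al\in\mathcal A}\tau_\al>0$ and $\pi_1(\beta)=d$, and the inequality $\sum_{\al\in\mathcal A;\,\pi_1(\al)\le d-1}\tau_\al<0$ from $\Theta^{\mathcal A}(\pi)$ becomes $\sum_{\al\in\mathcal A}\tau_\al-\tau_\beta<0$, i.e. $\sum_{\al\in\mathcal A}\tau_\al<\tau_\beta=0$, contradicting $\sum_{\al\in\mathcal A}\tau_\al>0$. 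In either case we reach a contradiction, which proves the lemma.

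I do not expect any genuine obstacle here; the argument is pure sign bookkeeping, and the only point needing an explicit remark is that $k=d-1$ is a legitimate index, which is guaranteed by $\#\mathcal A\ge2$. As a sanity check one can reinterpret this through Remark~\ref{hitby0}: if $\tau_\beta=0$ the side corresponding to $\beta$ is horizontal, while in the backward ``top'' (resp. ``bottom'') type the vertex $\sum_{\al\in\mathcal A}(\la_\al+i\tau_\al)$ lies strictly below (resp. above) the real axis and the adjacent top vertex lies strictly above (resp. below) it, so a horizontal side cannot be the first one met by the rightward horizontal separatrix issued from $(0,0)$. Since this is merely the algebraic computation in geometric clothing, I would present only the algebraic version.
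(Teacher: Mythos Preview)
Your proof is correct and follows essentially the same contradiction argument as the paper: assume $\beta$ is a backward winner, say of top type so $\beta=\pi_0^{-1}(d)$ and $\sum_{\al}\tau_\al<0$, and then combine $\tau_\beta=0$ with the $\Theta^{\mathcal A}$ inequality at $k=d-1$ to force $\sum_{\al}\tau_\al>0$. Your write-up is in fact slightly cleaner than the paper's (which contains a sign typo) and you spell out the bottom case explicitly, but the underlying idea is identical.
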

\begin{figure}[h]
	\includegraphics[scale=0.5]{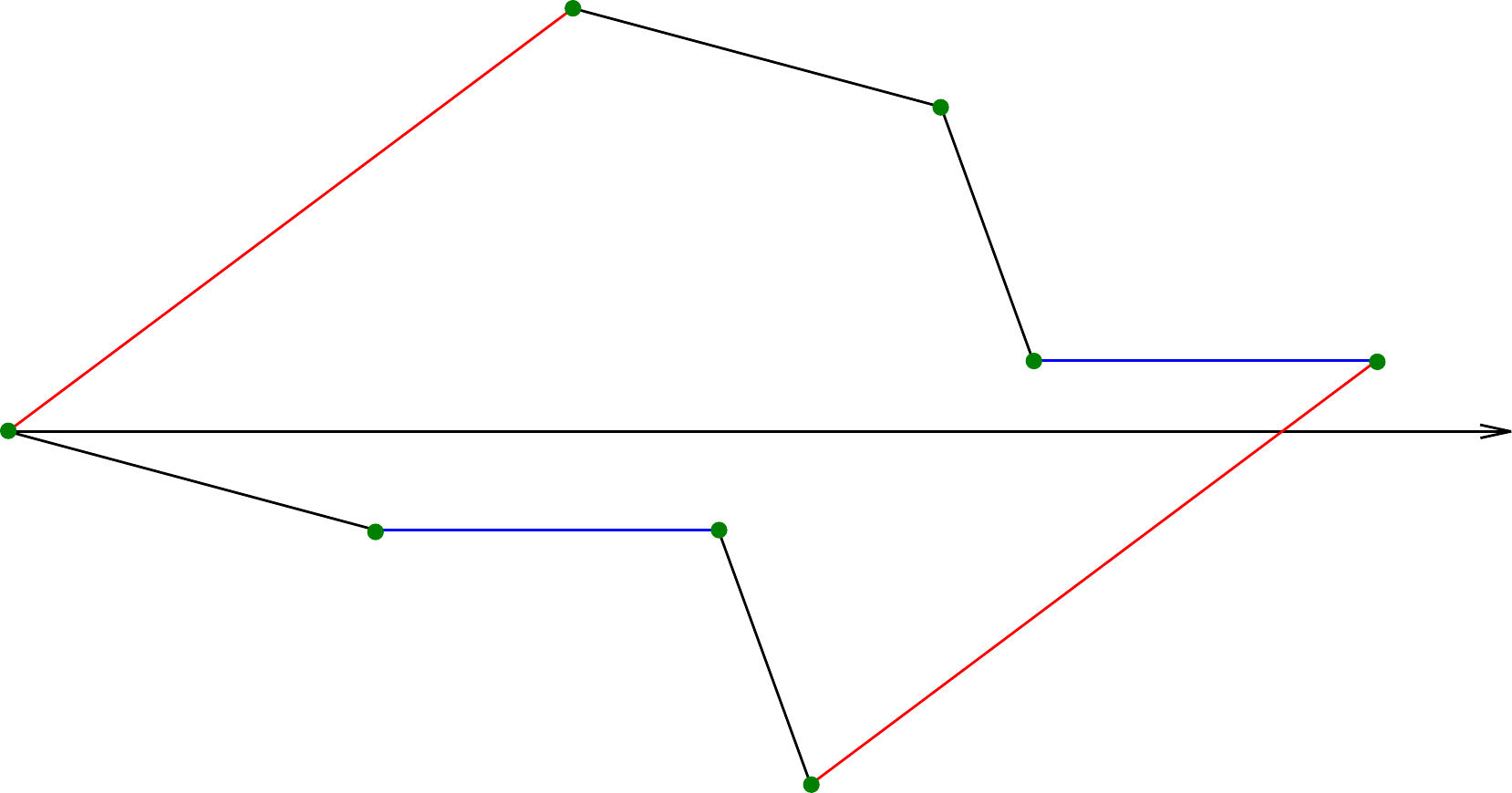}
	%		$\quad\quad\quad\quad$
	%		\includegraphics[scale=0.25]{drawing-1.pdf}
	\caption{A horizontal segment cannot cross transversally $x$-axis, thus the corresponding symbol cannot be a backward winner.}
\end{figure}
\begin{proof}
	By accelerating backward Rauzy-Veech algorithm if necessary we can assume that $\beta$ is a winner at step $1$ of  backward Rauzy-Veech induction and that $(\pi,\la,\tau)$ is of backward top type, that is $\beta=\pi_0^{-1}(\#\mathcal A)$ (the case when it is of backward bottom type is symmetric). Then $\sum_{\al\in\mathcal A}\tau_\al=0$. However, since $\tau_\beta=0$, we get $\sum_{\al\in\mathcal A\setminus\{\pi_0^{-1}(d)\}}\tau_\al=0$. This is a contradiction with the definition of $\Theta^{\mathcal A}$.
\end{proof}
%We are going to use the following crucial observation.
%\begin{theorem}[see (MUY)]\label{muy}
%	Let $(\pi,\la,\tau)\in \{\pi\}\times \Lambda^{\mathcal A}\times \Theta^{\mathcal A}$ be such that $(\pi^{-n},\la^{-n}, \tau^{-n}) $ is properly defined. Then the path of the backward Rauzy-Veech induction of $(\pi,\la,\tau)$ is infinitely many times of backward both top and backward bottom type.
%	\end{theorem}
We have the following property of $\infty$-complete orbits.
\begin{theorem}[see subsection 1.2.4 in \cite{MMY}]
	If the path of the forward Rauzy-Veech induction of $(\pi,\la,\tau)$ is properly defined and $\infty$-complete then the Rauzy-Veech matrix obtained after each but one  symbol has won at least $2d-3$ times is positive.
\end{theorem}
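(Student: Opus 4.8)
The statement to prove is the ``positivity after $2d-3$ wins'' fact for $\infty$-complete forward Rauzy-Veech orbits. The plan is to argue combinatorially about the Rauzy diagram and the structure of the Rauzy-Veech matrices $A^n(\pi,\la)$. Recall that each one-step matrix $A^1$ is the identity plus a single off-diagonal $+1$ in the position $(w,l)$, so every product $A^n$ is a non-negative integer matrix with $1$'s on the diagonal, and it can only grow as $n$ increases (the entries are non-decreasing, cf. \eqref{nondec}). The task is to show that once every symbol but one has been a winner at least $2d-3$ times, all entries of the accumulated matrix have become strictly positive.

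First I would set up the standard combinatorial bookkeeping: for a path $\gamma$ in the Rauzy diagram from $\pi$ to $\pi'$ we write $A(\gamma)$ for the associated matrix, and we say $\gamma$ is \emph{positive} if $A(\gamma)$ has all entries $\ge 1$. The key elementary observation is a ``propagation'' lemma: if at some step $\al$ is the winner and $\beta$ the loser, then after that step the row/column relations force $A_{\cdot\al} \ge A_{\cdot\al} + A_{\cdot\beta}$ coordinatewise for the relevant column (i.e.\ the winner's column absorbs the loser's column), so any positivity already present in $\beta$'s column is inherited by $\al$'s column. Dually for rows under the inverse. Thus ``being positive'' spreads along the winner/loser pairs that occur, and the combinatorial content is: how many steps does it take for this spreading to saturate the whole matrix, given that each symbol (but one) wins at least $2d-3$ times.

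The core of the argument is then a counting/connectivity statement inside a single Rauzy class. I would proceed in stages. Stage one: after each symbol $\al$ (in some order) has won once, one shows the column of $\al$ has at least two nonzero entries, or more precisely that a suitable ``support graph'' on $\mathcal A$ becomes connected; here one uses irreducibility of the permutations and the description of how winners/losers change along admissible paths. Stage two: iterate — each additional batch of wins by all-but-one symbol increases the minimum of the supports by (at least) one, and since the diameter-type quantity one needs to beat is on the order of $d$, after roughly $2(d-1)$ rounds (hence $\le 2d-3$ wins per symbol, after excising one ``free'' symbol and one overlap) every entry is $\ge 1$. I would phrase stage two as: if a path contains, for each $\al\neq \al_0$, at least $k$ occurrences of $\al$ as winner, then $\min_{\g,\delta} A(\gamma)_{\g\delta} \ge \lceil (k - (d-2))/?\rceil$ or a similar bound, calibrated so that $k = 2d-3$ forces the minimum to be $\ge 1$.

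The main obstacle I anticipate is making the ``propagation'' quantitative in a way that is tight enough to yield exactly $2d-3$ rather than some cruder bound like $O(d^2)$: one must track carefully which columns have become positive, use that a winner can only change slowly, and exploit irreducibility to guarantee that the ``one symbol that never wins'' cannot block the spreading to its own row and column (its column gets filled because it can still be a \emph{loser}, and its row because some other symbol's column already dominates it). I would handle this by a careful double-induction on $d$ and on the number of completed rounds, reducing to the sub-permutation obtained by suppressing an already-saturated symbol — this is the standard device (going back to Veech and to \cite{MMY}) and I expect the write-up to invoke the Rauzy-diagram connectedness lemma and the explicit combinatorics of the two moves rather than any analytic input. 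Since the paper cites this as a known result (subsection 1.2.4 of \cite{MMY}), I would in fact keep the proof short: state the propagation lemma, state the round-counting estimate, and refer to \cite{MMY} or \cite{Viana} for the detailed verification of the constant $2d-3$.
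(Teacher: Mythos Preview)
The paper does not prove this theorem: it is quoted from \cite{MMY} (subsection~1.2.4) and used only as a black box to obtain Corollary~\ref{signchange}. There is thus no proof in the paper to compare your proposal against, and your own closing suggestion---sketch the propagation mechanism and refer to \cite{MMY} for the sharp constant---already goes beyond what the paper does.

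On the sketch itself: the overall shape is correct (each elementary step adds one row/column to another, positivity spreads along winner--loser pairs, and one counts rounds), and this is indeed how \cite{MMY} proceeds. Two small remarks in case you want to flesh it out. First, your displayed relation ``$A_{\cdot\al} \ge A_{\cdot\al} + A_{\cdot\beta}$'' is garbled; with the paper's convention $A^1 = I - E_{wl}$, the non-negative matrix is $(A^n)^{-1}$, and right-multiplying it by $I + E_{wl}$ adds the \emph{winner's} column to the \emph{loser's} column, not the other way around. Second, the exact constant $2d-3$ in \cite{MMY} comes from a careful induction tracking which symbols have already appeared as both winner and loser, not from a uniform ``each completed round increases the minimum support by one'' estimate as in your Stage two; your bound would need to be redone to recover the sharp constant. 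For the purposes of the present paper none of this matters, since only the existence of some finite threshold is used.
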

\noindent	By \eqref{type} we have the following.
	\begin{corollary}\label{signchange}
		If the path of the backward Rauzy-Veech induction of $(\pi,\la,\tau)$ is properly defined and $\infty$-complete then the Rauzy-Veech matrix obtained after each symbol but one was a backward winner at least $2d-2$ times is positive.
	\end{corollary}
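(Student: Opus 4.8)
The plan is to deduce the backward statement directly from the forward theorem quoted just above (Theorem from subsection 1.2.4 in \cite{MMY}) by reversing time, using the correspondence \eqref{type} together with the second equivalence of the Lemma on page 4 (that a symbol is a backward winner of $\mathbf{R}^{-1}$ at $(\pi,\la,\tau)$ iff it is a winner of $\mathbf{R}$ at $\mathbf{R}^{-1}(\pi,\la,\tau)$).

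First I would fix $(\pi,\la,\tau)$ whose backward orbit is properly defined and $\infty$-complete, and pass to the point $(\pi',\la',\tau') = \mathbf{R}^{-N}(\pi,\la,\tau)$ sufficiently far back in the backward orbit so that, reading the orbit \emph{forward} from $(\pi',\la',\tau')$ back up to $(\pi,\la,\tau)$, each symbol but one has already won at least $2d-3$ times in the \emph{forward} sense. Such an $N$ exists: by $\infty$-completeness of the backward orbit every symbol is a backward winner infinitely often, hence the sequence of backward winners read from $(\pi,\la,\tau)$ going backward contains each symbol as often as we like; but by the Lemma a backward winner of $\mathbf{R}^{-1}$ at a point is exactly the forward winner of $\mathbf{R}$ at its image under $\mathbf{R}^{-1}$, so these same symbols are the forward winners along the finite forward path from $(\pi',\la',\tau')$ to $(\pi,\la,\tau)$. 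Concretely, ``$\alpha$ was a backward winner at least $2d-2$ times along the backward orbit of $(\pi,\la,\tau)$'' means there were at least $2d-2$ backward steps at which $\alpha$ won; choosing $N$ to capture all but possibly the last such step for the exceptional symbol and all $2d-2$ for the others, the corresponding forward path from $\mathbf{R}^{-N}(\pi,\la,\tau)$ to $(\pi,\la,\tau)$ has each symbol but one winning at least $2d-3$ times.

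Next I would apply the forward theorem to that finite forward path: since the path is a segment of the forward Rauzy-Veech orbit of $(\pi',\la',\tau')$ and is $\infty$-complete (it is a sub-path of the forward orbit of a point whose forward orbit, being the whole two-sided orbit, is $\infty$-complete — or one simply notes the theorem only needs the combinatorial fact that each symbol but one has won $2d-3$ times), the Rauzy-Veech matrix $A$ realizing this forward path is positive. But this matrix $A$ is, by the definitions of $A^{-n}$ on page 4 and the cocycle relations \eqref{matback}, precisely the matrix $A^{-N}(\pi,\la,\tau)$ (up to the indexing convention $A^{-N}(\pi,\la,\tau)=(A^{N}(\mathbf{R}^{-N}(\pi,\la,\tau)))^{-1}$ — one must check orientation here, but the forward matrix along the path from $\mathbf{R}^{-N}(\pi,\la,\tau)$ up to $(\pi,\la,\tau)$ is exactly $A^{N}(\mathbf{R}^{-N}(\pi,\la,\tau))$, and positivity of a nonnegative integer matrix is equivalent to positivity of... no: rather I would arrange the statement so that ``the Rauzy-Veech matrix obtained after each symbol but one was a backward winner at least $2d-2$ times'' refers directly to the matrix $A^{N}(\mathbf{R}^{-N}(\pi,\la,\tau))$ of the forward path, which the forward theorem says is positive). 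Hence that matrix is positive, which is exactly the conclusion.

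The main obstacle, and the only place requiring genuine care, is the bookkeeping of the two indexing/orientation conventions: matching ``$\alpha$ won $k$ times in the backward orbit of $(\pi,\la,\tau)$'' with ``$\alpha$ won $k$ times in the forward path from $\mathbf{R}^{-N}(\pi,\la,\tau)$ to $(\pi,\la,\tau)$'', and identifying the forward Rauzy-Veech product matrix of that path with the relevant $A^{-N}$; the shift from $2d-3$ (forward) to $2d-2$ (backward) in the statement is a buffer absorbing the off-by-one that arises because the step at which a symbol wins, read forward versus backward, is attributed to adjacent points in the orbit. Once the conventions are aligned, the corollary is an immediate translation of the forward theorem, as the phrase ``By \eqref{type} we have the following'' already signals.
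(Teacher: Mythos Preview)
Your approach is exactly the paper's: the corollary is deduced from the forward theorem by the correspondence \eqref{type} (and the accompanying winner equivalence), and the paper's entire proof is the sentence ``By \eqref{type} we have the following.''

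One point is worth tightening, since you flag it yourself as the main obstacle. In this paper's conventions the forward product $A^{N}$ has a $-1$ entry and is \emph{not} non-negative; it is $(A^{N})^{-1}$ that is non-negative, and the cited forward theorem (in the MMY convention) asserts positivity of this non-negative matrix. Since by definition $A^{-N}(\pi,\la,\tau)=\bigl(A^{N}(\mathbf{R}^{-N}(\pi,\la,\tau))\bigr)^{-1}$, the backward matrix $A^{-N}$ is \emph{literally} the matrix the forward theorem declares positive---so your attempted detour through ``positivity of a non-negative integer matrix is equivalent to positivity of its inverse'' (which is false) and your final suggestion to reinterpret the statement as referring to $A^{N}(\mathbf{R}^{-N}(\pi,\la,\tau))$ are both unnecessary. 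Also, the Lemma gives an exact bijection between the multisets of forward winners along the path from $\mathbf{R}^{-N}(\pi,\la,\tau)$ to $(\pi,\la,\tau)$ and the backward winners along the first $N$ backward steps of $(\pi,\la,\tau)$, with no shift; so $2d-3$ would already suffice and the $2d-2$ in the statement is harmless slack rather than a genuine off-by-one.
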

\begin{theorem}\label{noinfty}
	Assume that $(\pi,\la,\tau)\in \{\pi\}\times \Lambda^{\mathcal A}\times \Theta^{\mathcal A}$ has a horizontal saddle connection. Then the backward Rauzy-Veech induction orbit of $(\pi,\la,\tau)$ is not  backward $\infty$-complete.
	\end{theorem}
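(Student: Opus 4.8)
The plan is to argue by contradiction: assume the backward Rauzy-Veech orbit of $(\pi,\la,\tau)$ is infinite and $\infty$-complete, yet a horizontal saddle connection exists. The key geometric fact I would exploit is Remark \ref{hitby0}: a symbol $\al$ is a backward winner exactly when the segment corresponding to $\al$ is the first one hit by the rightward separatrix emanating from $(0,0)$. Combined with Lemma \ref{orb0}, the infiniteness of the orbit tells us the separatrix from $(0,0)$ is itself infinite (not a saddle connection), so the offending horizontal saddle connection $\gamma$ lies elsewhere in the surface. The goal is to show that $\gamma$ forces some symbol to \emph{stop} being a backward winner, contradicting $\infty$-completeness.

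**First I would** set up the obstruction. A horizontal saddle connection $\gamma$ has two endpoints in $\Sigma$. In the polygonal representation $(\pi^{-n},\la^{-n},\tau^{-n})$, as $n$ grows the polygon gets horizontally wider (by \eqref{lalength}) but $\gamma$ has fixed horizontal length $\ell$. Following the argument in Lemma \ref{orb0}, once $|\la^{-n}|>\ell$ the entire saddle connection $\gamma$ sits "inside" a fixed horizontal strip; more precisely, I would track the positions of the two singular endpoints of $\gamma$ relative to the polygon's edges. Since the singularities must always be vertices of the polygon and lie off the horizontal line through $(0,0)$ (as $\gamma$ is horizontal, both its endpoints share the same $\tau$-height, call it $t_0\neq 0$). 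I want to use zippered rectangles: the left endpoint of $\gamma$ sits on the left edge of some rectangle $R_\beta$ at height $t_0$, and the right endpoint of $\gamma$ must arrive at another left edge. The horizontal flow from the left endpoint of $\gamma$ runs along $\gamma$ and hits the right singular endpoint; translating this into the combinatorics of the backward algorithm, I expect to show that the symbol $\beta$ whose segment "receives" $\gamma$'s terminal endpoint (or the symbol whose corresponding edge must be crossed transversally to continue winning) cannot serve as a backward winner past a certain stage — echoing the mechanism of Lemma \ref{win0}, where a degenerate ($\tau_\beta=0$) configuration blocks winning.

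**The main obstacle** will be converting the single geometric datum "$\gamma$ is a horizontal saddle connection at height $t_0$" into a statement about which symbol freezes. One natural route: since $\gamma$ connects two singularities horizontally, pull $\gamma$ back under $\mathbf R^{-n}$; its endpoints remain singularities and its direction stays horizontal, but its position inside the ever-widening polygon stabilizes relative to the right edge. The rightward separatrix from $(0,0)$, on the other hand, must keep entering new regions to generate infinitely many distinct winners. I would argue that for large $n$ the separatrix from $(0,0)$ and the saddle connection $\gamma$ cannot both be accommodated: the winners are eventually confined to a proper sub-collection of symbols — those whose corresponding edges lie on the "far side" of $\gamma$, so some symbol on the "near side" of $\gamma$ stops winning. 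Then Corollary \ref{signchange} gives the contradiction: if every symbol but one won at least $2d-2$ times, the Rauzy-Veech matrix $A^{-n}$ would be strictly positive; but strict positivity of $A^{-n}$ would force $\tau^{-n}$ to have all-equal-sign partial sums pattern violated — more directly, a positive matrix applied to $\tau$ eventually makes $\sum_\al \tau^{-n}_\al$ blow up in absolute value in a controlled way, whereas the presence of $\gamma$ constrains a nontrivial integer combination $\sum c_\al \tau_\al = 0$ (the one certifying $\gamma$'s horizontality, via the height-$t_0$ constraint).

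**Concretely** I would close the argument as follows. The saddle connection $\gamma$ yields a nonzero integer vector $c=(c_\al)$ with $\sum_\al c_\al \tau_\al = 0$: indeed its horizontal displacement being $\ell$ and vertical displacement zero, expressed through the $\tau_\al$ via the polygon combinatorics, gives such a relation. Now if $\infty$-completeness held, infinitely often $A^{-n}$ would be strictly positive (Corollary \ref{signchange}). Pick such a large $n$ and consider the pullback relation: writing $\tau = A^{n}(\pi^{-n},\cdot)\,\tau^{-n}$, the vector $c$ transforms to $c' = (A^{-n})^{T} c$, which since $A^{-n}$ is positive and $c\neq 0$ non-trivially has \emph{all} entries of the same strict sign on the support that survives — in particular $c'_\alpha\neq 0$ for the backward winner $\alpha$ appearing at stage $n+1$. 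But arguing as in Lemma \ref{win0}/Lemma \ref{exist}, one shows the winning symbol at that stage must have $\tau^{-n}_\alpha$ free to change sign, incompatible with the frozen relation $\sum c'_\al \tau^{-n}_\al=0$ once $c'$ has full sign-definite support. This contradiction shows no $\infty$-complete infinite orbit can coexist with a horizontal saddle connection, proving Theorem \ref{noinfty}. The delicate point I would need to verify carefully is that the integer relation produced by $\gamma$ genuinely has the property that \emph{every} coordinate in the eventual winner-support is nonzero after enough positive pullbacks — this is where the $2d-2$ bound and positivity of $A^{-n}$ are doing essential work, and I'd expect to spend the bulk of the real proof nailing down exactly which coordinates of the relation vanish and showing they are disjoint from the set of eventual winners.
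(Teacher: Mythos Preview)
Your geometric setup in the first two paragraphs is essentially the paper's approach, and you were one step from closing it: once you put the left endpoint of $\gamma$ on the left edge of a rectangle $R_\beta$ in the zippered-rectangle picture, the only thing left is to observe that $\infty$-completeness, via Corollary~\ref{signchange}, makes $A^{-n}$ strictly positive, hence \emph{each} $\la^{-n}_\al>|\gamma|$ (not just the total $|\la^{-n}|$). Then $R_\beta$ alone is wider than $\gamma$, so the right endpoint of $\gamma$ lands in the \emph{interior} of $R_\beta$ and cannot be a singularity. That is the entire contradiction; no further combinatorics is needed.

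The algebraic route you pivot to instead has a genuine gap. The transformation law for the coefficient vector is wrong: from $\tau=(A^{-n})^{-1}\tau^{-n}$ and $c^T\tau=0$ you get $c'=((A^{-n})^{-1})^T c$, not $(A^{-n})^T c$. The matrix $(A^{-n})^{-1}$ is the \emph{forward} Rauzy--Veech matrix, which is not nonnegative (it has a $-1$ in each elementary factor), so there is no reason for $c'$ to be sign-definite. Even granting sign-definiteness, the proposed contradiction does not go through: a relation $\sum_\al c'_\al\tau^{-n}_\al=0$ with all $c'_\al>0$ is perfectly compatible with $\tau^{-n}\in\Theta^{\mathcal A}$, since $\tau^{-n}$ has entries of both signs, and nothing in the backward-winner condition forces a single coordinate $\tau^{-n}_\al$ to be ``free'' of such a relation. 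The appeal to Lemma~\ref{win0}/Lemma~\ref{exist} is by analogy only and does not supply the missing step. Drop this detour and finish the zippered-rectangle argument you already started.
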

\begin{proof}
If the backward Rauzy-Veech induction stops then in particular the path is not $\infty$-complete. Assume then that the backward Rauzy-Veech induction orbit is defined indefinitely.

	We will proceed by contradiction that is we assume that the backward Rauzy-Veech induction orbit is backward $\infty$-complete. Moreover, assume that $(\pi,\la,\tau)$ has a horizontal saddle connection $\ell$ of length $|\ell|$. Let $S\in\Sigma$ be its left endpoint. Then in the polygonal representation $\ell$ can be seen as a segment starting at one of the vertices $X$ of the polygon corresponding to $(\pi,\la,\tau)$, going rightwards. 
	
	In view of Corollary \ref{signchange}, there exists $n_0\in\n$ such that for $n>n_0$ the Rauzy-Veech matrix $A^{-n}$ satisfies
	\begin{equation}\label{>l}
	\min_{\al,\beta\in\mathcal A}A^{-n}_{\al,\beta}>|\ell|(\min_{\al\in\mathcal A} |\la_{\al}|)^{-1}.
	\end{equation}
	Indeed, backward $\infty$-completeness implies that for $n_0$ large enough each symbol won at least $2d-2$ times in the backward Rauzy-Veech induction path of length $n_0$. Corollary \ref{signchange} implies then that for  $n_0$ the Rauzy-Veech matrix $A^{-n_0}$ is positive. By repeating this process for $(\pi^{-n_0},\la^{-n_0},\tau^{n_0})$ and then proceeding inductively and using the fact that the entries of the product of $N$ positive integer matrices are not smaller than $(\#\mathcal A)^{N-1}$ we obtain $n$ in  \eqref{>l}. In particular, since $\la^{-n}=A^{-n}\la$, we have obtained that
	\begin{equation}\label{biglambda}
	|\la^{-n}_\al|>|\ell|\quad\text{for every}\ \al\in\mathcal A.
	\end{equation}
	
	Let  $(\pi^{-n},\la^{-n},h^{-n})$ be a rectangle representation of $(\pi^{-n},\la^{-n},\tau^{-n})$ and $X^{-n}$ be a vertex of  $ (\pi^{-n},\la^{-n},\tau^{-n})$ 
	 %corresponding to $S\in\Sigma$ 
	 such that $\ell$ is a horizontal segment whose left endpoint is $X^{-n}$. Recall that in the  rectangle representation, all vertices of $(\pi^{-n},\la^{-n},h^{-n})$  lie on the left-hand side vertical sides of  rectangles. Let $\al\in\mathcal A$ be such that $X^{-n}\in \{\sum_{\pi_0(\beta)<\pi_0(\al)}|\la_{\beta}|\}\times [0,h^{-n}_\al)$. By \eqref{biglambda}, we have that 
%	\begin{equation}\label{laplace}
%	|\ell|<|\la^{-n}_{\al}|,
%	\end{equation}
%	which implies that 
	\[
	\ell\subset\left[\sum_{\pi_0(\beta)<\pi_0(\al)}|\la_{\beta}|,\sum_{\pi_0(\beta)\le\pi_0(\al)}|\la_{\beta}|\right)\times [0,h^{-n}_\al),
	\]
	that is $\ell$ is wholly included inside the rectangle corresponding to $\al$. In particular the strict inequality in \eqref{biglambda} implies that the right-hand side endpoint of $\ell$ is in the interior of this rectangle. Thus it cannot be a vertex of  $(\pi^{-n},\la^{-n},h^{-n})$ and, in particular, it cannot be an element of $\Sigma$ which is a contradiction.
				\begin{figure}
		\includegraphics[scale=0.5]{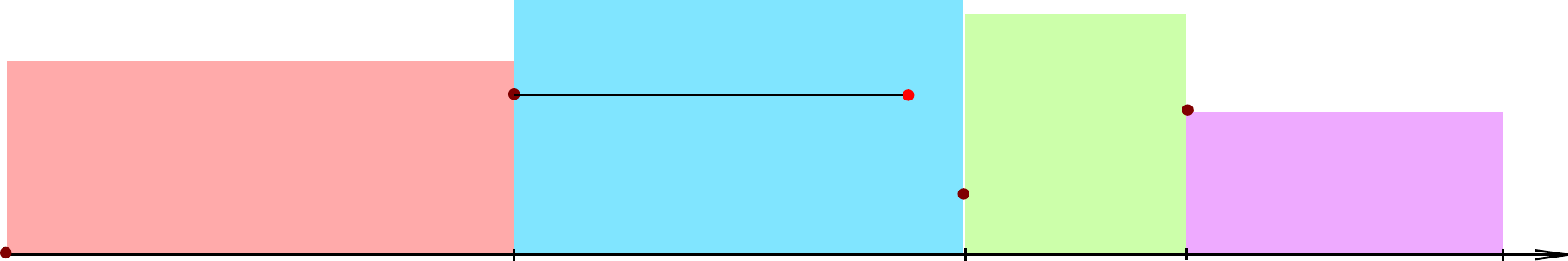}
		%		$\quad\quad\quad\quad$
		%		\includegraphics[scale=0.25]{drawing-1.pdf}
		\caption{The rectangle cannot grow wider if the surface has a horizontal saddle connection.}
	\end{figure}
	\end{proof}

\section{Horizontal connections and minimality}

% (or more precisely, the last and second from last vertex of upper or lower curve would be  respectively under or over the $x$-axis). 
In the Lemma \ref{win0} we described an easy condition for a symbol to stop being a backward winner. However, it is not the only possibility for a symbol to stop winning. Indeed, it appears that this phenomenon can be also observed in translation surfaces with horizontal cylinders.

\noindent\textbf{Example.} Consider $\#\mathcal A=4$ and $\pi\in S_0^{\mathcal A}$ given by.
\[
\pi_1\circ \pi_0^{-1}(i)=5-i\ \text{for}\ i=1,2,3,4.
\]
Consider moreover $\la\in\re_{>0}^\mathcal A$ and $\tau\in\Theta^{\mathcal A}$ such that $\tau_{\pi_0^{-1}(2)}=-\tau_{\pi_0^{-1}(3)}>0$ (see Figure \ref{ind}). 

We claim that
\begin{equation}\label{4win}
\text{neither $\pi_0^{-1}(2)$ nor ${\pi_0^{-1}(3)}$ can  be ever backward winners. }
\end{equation}
Indeed, if $\pi_0^{-1}(1)$ is the backward winner for  $(\pi,\la,\tau)$ then since $\tau_{\pi_0^{-1}(2)}>0$, it is also a backward winner for $(\pi^{-1},\la^{-1},\tau^{-1})$. Moreover, since  $\tau_{\pi_0^{-1}(2)}=-\tau_{\pi_0^{-1}(3)}$, we get $\sum_{\al\in\mathcal{A}}\tau^{-2}_\al=\sum_{\al\in\mathcal A}\tau_{\al}$ and thus $\pi_0^{-1}(1)$ is the backward winner for $(\pi^{-2},\la^{-2},\tau^{-2})$ and $\pi^{-3}=\pi$. We can get an analogous conclusion if $\pi_0^{-1}(4)$ is an initial winner. Note that in the 3  steps of backward Rauzy-Veech induction described above, $\pi_0^{-1}(2)$ and ${\pi_0^{-1}(3)}$ did not win backward. Hence, since $\pi^{-3}=\pi$, by induction we obtain \eqref{4win} (see figure \ref{ind}).
			\begin{figure}[h]
	\includegraphics[scale=0.6]{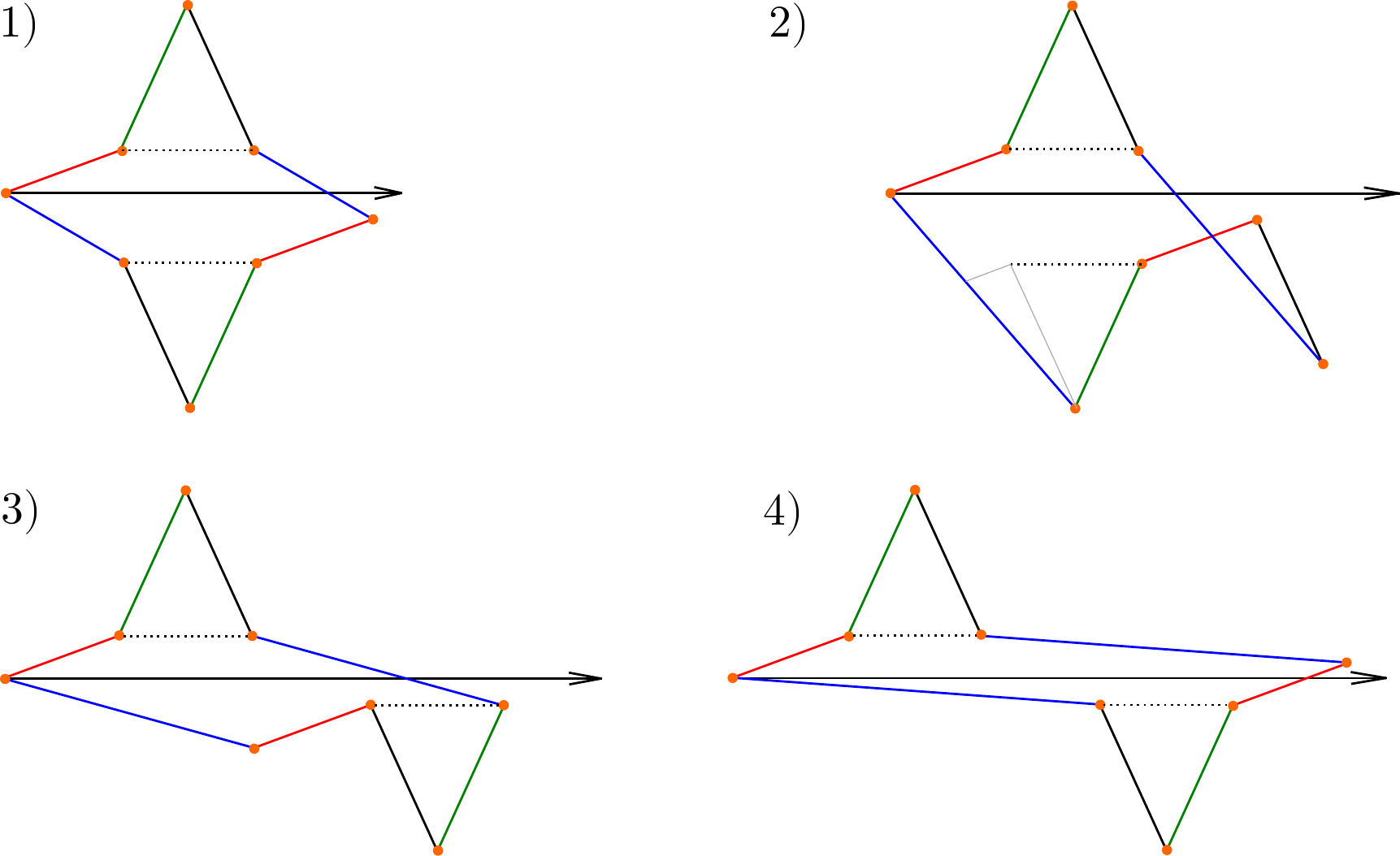}
	%		$\quad\quad\quad\quad$
	%		\includegraphics[scale=0.25]{drawing-1.pdf}
	\caption{Initial three steps of backward Rauzy-Veech induction of a translation surface with $\pi_0^{-1}(2)$ and ${\pi_0^{-1}(3)}$ never winning, while the values of $\tau_{\pi_0^{-1}(2)}$ and $\tau_{\pi_0^{-1}(3)}$ are non-zero}\label{ind}
\end{figure}

Note that the surface described above has a horizontal cylinder\footnote{A \emph{horizontal cylinder of length $\kappa$} is a maximal subsurface $C$ foliated by periodic orbits of length $\kappa$.}   starting at the side corresponding to $\pi_0^{-1}(2)$ of length $\la_{\pi_0^{-1}(2)}+\la_{\pi_0^{-1}(3)}$. In particular, the horizontal flow is not minimal. The following fact connects this observation with the parametrical occurrence of horizontal saddle connections. 
\begin{theorem}\label{min}
	Let $\mathcal A$ be an alphabet of $d\ge 3$ elements. If $(\pi^{-n},\la^{-n},\tau^{-n})$ is properly defined for all $n\in\n$, then the horizontal flow $(T_t)_{t\in\re}$ on $(\pi,\la,\tau)$ is minimal if and only if  there is $N\in\n$ such that the set of sides of the polygon $(\pi^{-N},\la^{-N},\tau^{-N})$ contains all (if any) horizontal connections.
\end{theorem}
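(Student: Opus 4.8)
The plan is to prove both implications separately, using the zippered rectangles / polygonal picture together with Lemma \ref{orb0} and the contrapositive machinery of Theorem \ref{noinfty}.

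For the easy direction ($\Leftarrow$), suppose that after $N$ backward steps all horizontal saddle connections appear as sides of the polygon $(\pi^{-N},\la^{-N},\tau^{-N})$. I would argue that this forces the horizontal flow to have no horizontal saddle connections at all, hence (by the decomposition Proposition quoted before Corollary \ref{minorb}, since there are no horizontal cylinders either — a horizontal cylinder would produce two horizontal saddle connections bounding it) the flow is minimal. The key point: a horizontal saddle connection that is a side of a polygon is a horizontal segment of the form $[a(\ldots,k-1),a(\ldots,k)]$; but such a side has a nonzero imaginary increment $\tau^{-N}_\al$ by virtue of the definition of $\Theta^{\mathcal A}$ applied to the partial sums — actually more carefully, I would use that the sides corresponding to a symbol $\al$ go from $a(\pi,\la,\tau,\pi_0(\al)-1)$ to $a(\pi,\la,\tau,\pi_0(\al))$, and by Lemma \ref{win0} the real obstruction is that a genuinely horizontal side would need $\tau^{-N}_\al=0$, contradicting the strict inequalities defining $\Theta^{\mathcal A}$ once one checks that no partial sum of $\tau$'s can vanish. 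So in fact no horizontal saddle connection can be a side, meaning the hypothesis ``contains all horizontal connections'' is vacuous: there are none. Minimality follows.

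For the main direction ($\Rightarrow$), assume the horizontal flow is minimal; I must produce $N$ such that after $N$ backward steps all horizontal saddle connections are sides. First, minimality rules out horizontal cylinders, so every horizontal saddle connection is an isolated separatrix; moreover minimality forces there to be only finitely many horizontal saddle connections (each is determined by a pair of singularities and a length, and in the minimal case the set of horizontal separatrices issuing from singularities is finite). Enumerate them $\ell_1,\dots,\ell_m$ with lengths $|\ell_1|,\dots,|\ell_m|$. The idea is that under backward Rauzy-Veech the polygon ``grows'': by \eqref{nondec} and \eqref{lalength} the widths $\la^{-n}_\al$ increase, but here the dynamics is NOT $\infty$-complete (Theorem \ref{noinfty}), so some symbols freeze. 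I would show that once $n$ is large enough that $|\la^{-n}_\al| > \max_i |\ell_i|$ for all symbols $\al$ that still appear as backward winners infinitely often, while the frozen symbols have stabilized, each $\ell_i$ — being a horizontal segment from a singularity — must become a side of the polygon: its left endpoint is a vertex $X^{-n}$ sitting on the left edge of some rectangle corresponding to a symbol $\al$, and by the width bound $\ell_i$ lies entirely within that rectangle; its right endpoint is also a singularity, hence a vertex, hence (being inside the closed rectangle, on its horizontal extent but forced to the boundary) it must be the other endpoint of the top or bottom side of that rectangle — i.e. $\ell_i$ is exactly a side corresponding to $\al$. This is essentially the ``reverse'' of the contradiction argument in the proof of Theorem \ref{noinfty}: there the saddle connection could never be trapped, here it must eventually be trapped as a side.

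The main obstacle, and the step I would spend the most care on, is justifying that the relevant widths do eventually exceed $\max_i|\ell_i|$ despite the failure of $\infty$-completeness. The symbols that are frozen (never win again after some step) have widths that are eventually constant, so I need those constant widths to already be larger than $\max_i|\ell_i|$ — this is not automatic and must be extracted from minimality. The right tool is Corollary \ref{minorb}: minimality gives a dense half-orbit of the horizontal flow, which in zippered-rectangles terms means the IET $(\pi^{-n},\la^{-n})$ is minimal for every $n$, so by Keane-type considerations the backward orbit is infinite and the combinatorial rotation number is ``balanced enough''; combined with the geometric fact that a frozen symbol corresponds to a sub-rectangle of the surface whose horizontal extent is eventually a fixed union of pieces, I would argue that if such a frozen width stayed $\le \max_i|\ell_i|$ forever, one could locate a singularity in the interior of the horizontal extent of that rectangle in a way incompatible with minimality (it would bound a periodic strip or a non-dense separatrix piece). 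Making this last incompatibility precise — ruling out the ``small frozen rectangle'' scenario using only minimality — is the crux; the rest is the same polygon-versus-rectangle bookkeeping already used in Theorem \ref{noinfty}.
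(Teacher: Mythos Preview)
Your $(\Leftarrow)$ direction contains a genuine error. You argue that a horizontal side would force $\tau^{-N}_\al=0$ and that this contradicts the definition of $\Theta^{\mathcal A}$. It does not: the cone $\Theta^{\mathcal A}$ only imposes sign conditions on the \emph{partial sums} $\sum_{\pi_\varepsilon(\al)\le k}\tau_\al$, and an individual coordinate $\tau_\al$ may perfectly well vanish (indeed Lemma \ref{win0} is stated precisely for that situation, and Corollary \ref{decre} is built around surfaces with several $\tau_{\beta_i}=0$). So the hypothesis ``all horizontal saddle connections are sides'' is \emph{not} vacuous, and your conclusion that there are no horizontal saddle connections is unjustified. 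The paper proves this direction by an entirely different mechanism: once all saddle connections are sides, Corollary \ref{decre} gives a subsequence with $\max_\al|\tau^{-n_k}_\al|\to 0$, so the polygon lies in an arbitrarily thin horizontal strip around the separatrix $\gamma_0$ issued from $0$; density of $\gamma_0$ then follows, and Corollary \ref{minorb} upgrades this to minimality.

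Your $(\Rightarrow)$ direction is headed in a workable but unnecessarily hard direction, and you correctly identify the unresolved crux (controlling the widths of the frozen symbols). The paper avoids this entirely. Instead of trying to force each saddle connection to become a side, it argues by contradiction: if some saddle connection never becomes a side, then by Theorem \ref{noinfty} the backward path is not $\infty$-complete, so (after renumbering) there is a symbol $\al$ with $\tau^{-n}_\al\neq 0$ for all $n$ that never wins backward. By Remark \ref{hitby0} the separatrix $\gamma_0$ never crosses the side labelled $\al$, and hence never enters the fixed open triangle cut off by that side in the polygon. That triangle has positive area, so $\gamma_0$ is not dense, contradicting minimality. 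This bypasses any quantitative control of rectangle widths.
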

%Before proving the above result, let us state first the following auxiliary lemma.
Before proving Theorem \ref{min}, let us state a result concerning the decay of the vertical parameters of polygonal representations of  a translation surface as we act by the backward Rauzy-Veech induction algorithm. In \cite{MUY} the authors proved the following result.
\begin{proposition}[Lemma A.8 in \cite{MUY}]\label{decre1}
	Assume that the surface $(\pi,\la.\tau)$ has no horizontal saddle connections. Then there exists an increasing sequence $\{n_k\}_{k\in\n}$ of positive integers such that 
		\[
	\lim_{k\to\infty}\max_{\al\in\mathcal A}\tau^{-n_k}_\al=0.
	\]
\end{proposition}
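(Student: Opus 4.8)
The plan is to deduce the decay of the vertical data from two ingredients: invariance of the area, and the eventual positivity of the backward Rauzy--Veech matrices forced by $\infty$-completeness. First I would exploit the hypothesis itself. Since $(\pi,\la,\tau)$ has no horizontal saddle connections, the horizontal flow is minimal and, by Proposition~4.3 of \cite{MUY}, the backward orbit is defined for all $n\in\n$ and is backward $\infty$-complete. Corollary~\ref{signchange} then gives, exactly as in the proof of Theorem~\ref{noinfty} (iterating positivity and using that a product of $N$ positive integer matrices has entries at least $(\#\mathcal A)^{N-1}$), that $\min_{\al,\beta\in\mathcal A}A^{-n}_{\al\beta}\to\infty$. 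Consequently, from $\la^{-n}=A^{-n}\la$,
\[
\la^{-n}_\al=\sum_{\beta\in\mathcal A}A^{-n}_{\al\beta}\la_\beta\ge \Big(\min_{\al,\beta\in\mathcal A}A^{-n}_{\al\beta}\Big)|\la|\xrightarrow[n\to\infty]{}\infty,
\]
so in fact $\min_{\al\in\mathcal A}\la^{-n}_\al\to\infty$, strengthening \eqref{lalength}.

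Next I would convert this into decay of the heights. The area $\sum_{\al\in\mathcal A}\la^{-n}_\al h^{-n}_\al$ is independent of $n$, since the Rauzy--Veech cocycle preserves the area pairing $\langle\la,\Omega_\pi^T\tau\rangle$ (the defining relation $A^{T}\Omega_{\pi^1}A=\Omega_\pi$); call this positive constant $\mathcal A_0$. As every $h^{-n}_\al$ is positive, each summand is at most $\mathcal A_0$, whence $h^{-n}_\al\le \mathcal A_0/\la^{-n}_\al\le \mathcal A_0/\min_{\g\in\mathcal A}\la^{-n}_{\g}\to 0$. Thus $\max_{\al\in\mathcal A}h^{-n}_\al\to 0$ for the \emph{full} sequence.

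The delicate step is passing from the heights back to $\tau$, and here I would use compactness together with a separation statement. Since the Rauzy class is finite, there is an increasing sequence $\{n_k\}$ along which $\pi^{-n_k}=\pi^*$ is constant; set $\Omega^*=\Omega_{\pi^*}$. Along it $(\Omega^*)^T\tau^{-n_k}=h^{-n_k}\to 0$ while $\tau^{-n_k}\in\Theta^{\mathcal A}(\pi^*)$. Suppose, for contradiction, that $\max_\al\tau^{-n_k}_\al\not\to 0$; after a further subsequence $\|\tau^{-n_k}\|\ge c>0$, and the rescaled vectors $\sigma_k=\tau^{-n_k}/\|\tau^{-n_k}\|$ have a limit $\sigma_*$ with $\|\sigma_*\|=1$, lying in the closed cone $\overline{\Theta^{\mathcal A}(\pi^*)}$ and satisfying $(\Omega^*)^T\sigma_*=\lim_k h^{-n_k}/\|\tau^{-n_k}\|=0$. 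So everything reduces to the claim
\[
\overline{\Theta^{\mathcal A}(\pi^*)}\cap\ker\Omega^*=\{0\}.
\]

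This last claim is the hard part and is where I expect the real work to lie. A nonzero $\sigma\in\ker\Omega^*$ is the suspension datum of a \emph{degenerate} configuration in which all zippered-rectangle heights vanish; consequently each vertex-singularity $S$ of the polygon inherits a well-defined height $y_S$, the partial sums $\sum_{\pi_0(\al)\le k}\sigma_\al$ and $\sum_{\pi_1(\al)\le k}\sigma_\al$ being exactly the heights of the top and bottom vertices. Membership of $\sigma$ in $\overline{\Theta^{\mathcal A}(\pi^*)}$ forces $y_S\ge 0$ for every $S$ occurring as a top vertex and $y_S\le 0$ for every $S$ occurring as a bottom vertex. Since the corners at any cone point alternate between top and bottom corners of the polygon as one turns around the point, every singularity occurs as \emph{both} a top and a bottom vertex; hence $y_S=0$ for all $S$, i.e. $\sigma=0$. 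Granting this, $\|\sigma_*\|=1$ is contradicted, so $\max_{\al\in\mathcal A}\tau^{-n_k}_\al\to 0$ along $\{n_k\}$, as asserted. The single combinatorial input that every singularity meets both the top and the bottom boundary of the polygon---equivalently $\overline{\Theta^{\mathcal A}(\pi^*)}\cap\ker\Omega^*=\{0\}$---is the one point that genuinely uses the structure of irreducible permutations, and I would isolate it as a separate lemma.
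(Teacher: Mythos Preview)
The paper does not actually prove this proposition; it is imported from \cite{MUY} (Lemma~A.8) and used as a black box, so there is no in-paper argument to compare your proposal against.

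That said, your approach is the standard one and is correct. Backward $\infty$-completeness (granted by the hypothesis via Proposition~4.3 of \cite{MUY}) together with Corollary~\ref{signchange} gives $\min_{\al}\la^{-n}_\al\to\infty$; area invariance $\sum_\al\la^{-n}_\al h^{-n}_\al=\mathrm{const}$ then forces $\max_\al h^{-n}_\al\to 0$; and passing to a subsequence with fixed permutation $\pi^*$ and normalising reduces everything to the cone--kernel separation
\[
\overline{\Theta^{\mathcal A}(\pi^*)}\cap\ker\Omega_{\pi^*}=\{0\}.
\]
Your sketch of this last fact is on the right track. One computes $h_\al=T_{\pi_0(\al)}-B_{\pi_1(\al)}$ where $T_k=\sum_{\pi_0(\beta)\le k}\sigma_\beta$ and $B_k=\sum_{\pi_1(\beta)\le k}\sigma_\beta$, so $h=0$ does force each pair of identified side-endpoints to share a height, and propagating around a cone point gives a well-defined $y_S$. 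The cone inequalities give $T_k\ge 0$ and $B_k\le 0$ only for $k<\#\mathcal A$, so you must check separately that the singularity containing the extreme vertex $a(\#\mathcal A)=b(\#\mathcal A)$ is also represented by some $b(m)$ and some $a(k)$ with $m,k<\#\mathcal A$; this is exactly where irreducibility (namely $\pi_0^{-1}(\#\mathcal A)\neq\pi_1^{-1}(\#\mathcal A)$) enters. With that case handled, all $y_S=0$, hence all partial sums $T_k,B_k$ vanish and $\sigma=0$. Isolating this as a lemma, as you suggest, is the right organisation.
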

\begin{corollary}\label{decre}
	Assume that $(\pi,\la,\tau)$ is such that its only horizontal connections (if any) are sides of the polygonal representations. Then there exists an increasing sequence $\{n_k\}_{k\in\n}$ of positive integers such that 
	\[
	\lim_{k\to\infty}\max_{\al\in\mathcal A}\tau^{-n_k}_\al=0.
	\]
\end{corollary}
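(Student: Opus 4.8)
The plan is to deduce Corollary \ref{decre} from Proposition \ref{decre1} by passing to a quotient surface in which the offending horizontal saddle connections have been collapsed. Concretely, suppose $(\pi,\la,\tau)$ has horizontal connections, all of which are sides of the polygonal representation. Each such side corresponds to some symbol $\al$, and being horizontal means precisely that $\tau_\al = 0$. So the set $\mathcal B := \{\al \in \mathcal A : \tau_\al = 0\}$ is nonempty, and by hypothesis every horizontal connection of the surface is the side corresponding to some $\al \in \mathcal B$. By Lemma \ref{win0} no symbol in $\mathcal B$ is ever a backward winner, hence the backward Rauzy-Veech dynamics never touches those symbols: for every $n$ and every $\al \in \mathcal B$ we still have $\tau^{-n}_\al = 0$ and, more importantly, the combinatorial reductions only ever involve symbols outside $\mathcal B$.

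The key step is then to observe that erasing the symbols of $\mathcal B$ yields a genuine translation surface $(\hat\pi, \hat\la, \hat\tau)$ on the alphabet $\hat{\mathcal A} := \mathcal A \setminus \mathcal B$, obtained by deleting the corresponding (horizontal) sides and reindexing; one must check that $\hat\pi$ is still irreducible, equivalently that the collapse does not disconnect the surface or create a reducible permutation — this follows because a horizontal side is a saddle connection joining singularities, so collapsing it is a standard surface operation producing a lower-genus (or lower-complexity) translation surface, and irreducibility of the induced permutation can be verified directly from the definition of $S_0^{\mathcal A}$ since none of the deleted symbols can be the "obstruction" symbol $\#\mathcal A$. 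Crucially, $(\hat\pi,\hat\la,\hat\tau)$ has \emph{no} horizontal saddle connections: any such connection in the collapsed surface would pull back to a horizontal connection in $(\pi,\la,\tau)$ which is not a side corresponding to a symbol of $\mathcal B$, contradicting our hypothesis. Moreover the backward Rauzy-Veech orbit of $(\hat\pi,\hat\la,\hat\tau)$ is exactly the "restriction to $\hat{\mathcal A}$" of the orbit of $(\pi,\la,\tau)$, since the winners of the original orbit all lie in $\hat{\mathcal A}$; in particular $\hat\tau^{-n}_\al = \tau^{-n}_\al$ for all $\al \in \hat{\mathcal A}$ and all $n$.

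Given this, I would apply Proposition \ref{decre1} to $(\hat\pi, \hat\la, \hat\tau)$, which has no horizontal saddle connections, to obtain an increasing sequence $\{n_k\}$ with $\lim_{k\to\infty} \max_{\al \in \hat{\mathcal A}} \hat\tau^{-n_k}_\al = 0$. Since $\tau^{-n}_\al = 0$ for every $\al \in \mathcal B$ and every $n$, we have
\[
\max_{\al\in\mathcal A}\tau^{-n_k}_\al = \max\Bigl\{\max_{\al\in\hat{\mathcal A}}\hat\tau^{-n_k}_\al,\ 0\Bigr\} = \max_{\al\in\hat{\mathcal A}}\hat\tau^{-n_k}_\al \longrightarrow 0,
\]
which is exactly the desired conclusion. (If $(\pi,\la,\tau)$ has no horizontal connections at all, the corollary is just Proposition \ref{decre1} verbatim, so we may assume $\mathcal B \neq \emptyset$.)

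I expect the main obstacle to be making the collapsing construction precise and verifying that it interacts correctly with the Rauzy-Veech combinatorics: one has to confirm (i) that deleting a horizontal side yields an irreducible permutation in $S_0^{\hat{\mathcal A}}$ and a point of $\Lambda^{\hat{\mathcal A}} \times \Theta^{\hat{\mathcal A}}(\hat\pi)$ — the inequalities defining $\Theta$ are preserved because the deleted coordinates are zero — and (ii) that the backward Rauzy-Veech steps commute with the deletion, i.e. $\widehat{\mathbf R^{-1}(\pi,\la,\tau)} = \mathbf R^{-1}(\hat\pi,\hat\la,\hat\tau)$, which rests on the fact (from Lemma \ref{win0}, or Remark \ref{hitby0}) that the backward winner is never in $\mathcal B$ since the rightward separatrix from $(0,0)$ cannot first hit a horizontal side. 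Once these bookkeeping facts are in place the corollary follows immediately. An alternative, slightly slicker route that avoids the quotient surface entirely: run the argument of the proof of Proposition \ref{decre1} directly, noting that its only use of "no horizontal saddle connections" is to guarantee certain symbols keep winning and certain $\tau$-coordinates stay nonzero; under our weaker hypothesis the symbols of $\mathcal B$ have $\tau_\al \equiv 0$ throughout and never win, so they are inert and can simply be ignored in that proof. I would present the quotient-surface version as the main proof since it is cleaner to state, and mention the direct version as a remark.
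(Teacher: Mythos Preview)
Your approach is the same as the paper's at the level of ideas: collapse the horizontal sides, apply Proposition~\ref{decre1} to the resulting surface (which has no horizontal saddle connections), and use Lemma~\ref{win0} to transfer the conclusion back. The paper's execution differs in one respect that neatly sidesteps your obstacles (i) and (ii): rather than deleting the symbols of $\mathcal B$ and passing to a smaller alphabet, it keeps the same alphabet and permutation and simply sets $\tilde\la_{\beta_i}=0$ (noting in a footnote that backward Rauzy--Veech extends to this degenerate case). Since the backward step depends only on $\pi$ and the sign of $\sum_\al\tau_\al$, the orbits of $(\pi,\la,\tau)$ and $(\pi,\tilde\la,\tau)$ have identical $\pi^{-n}$ and $\tau^{-n}$ step by step, so there is no irreducibility check and no commutation to verify. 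By contrast, your commutation claim $\widehat{\mathbf R^{-1}(\pi,\la,\tau)}=\mathbf R^{-1}(\hat\pi,\hat\la,\hat\tau)$, and hence $\hat\tau^{-n}_\al=\tau^{-n}_\al$ for all $n$, fails as stated: a symbol of $\mathcal B$ can occur as the backward \emph{loser} (the symbol at position $\pi_1(w)+1$), and then one backward step on the original leaves the quotient data $(\hat\pi,\hat\tau)$ unchanged, so the quotient orbit matches only a subsequence of the original orbit rather than step for step. This is easily repaired and does not affect the conclusion, but the paper's $\tilde\la_{\beta_i}=0$ trick avoids the issue entirely.
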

\begin{proof}
	If there are no saddle connections, then the statement of the corollary follows directly from Proposition \ref{decre1}. Let then $1\le k\le \#\mathcal A$ and $\beta_1,\ldots,\beta_k\in\mathcal A$ be such that $\tau_{\beta_i}=0$ for all $i=1,\ldots,k$. Consider a surface $(\pi,\tilde\la,\tilde\tau)$ obtained by setting $\tilde\tau=\tau$ and
	\[
	\tilde\la_\al=\begin{cases}
	0 & \text{ iff }\al=\beta_i\text{ for some }i=1,\ldots,k\\
	\la_\al \text{ otherwise }
	\end{cases}
	\]
	Since $\tau_{\beta_i}=0$ for all $i=1,\ldots,k$, the surface $(\pi,\la',\tau)$ is indeed properly defined\footnote{This surface is not an element of $S_0^{\mathcal A}\times\Lambda\mathcal A\times\Theta^{\mathcal A}$. However the backward Rauzy-Veech algorithm can be easily extended to surfaces with some of the horizontal parameters vanishing.} and via assumptions does not have horizontal saddle connections. Thus by Proposition \ref{decre1} for every $\ep>0$ there exists $n_\ep\in\n$ such that $(\pi^{n_\ep},\tilde\la^{n_\ep},\tilde\tau^{n_\ep})$ satisfies
	\[
	\max_{\al\in\mathcal A}|\tilde\tau^{-n_k}_\al|<\ep.
	\]
	It suffices now to see that to obtain the polygonal representation of $(\pi^{n_\ep},\la^{n_\ep},\tau^{n_\ep})$  from $(\pi^{n_\ep},\tilde\la^{n_\ep},\tilde\tau^{n_\ep})$  we only ``extend" the sides of  $(\pi^{n_\ep},\tilde\la^{n_\ep},\tilde\tau^{n_\ep})$   horizontally and not vertically (see Figure \ref{ext}). 
				\begin{figure}[h]
		\includegraphics[scale=1.1]{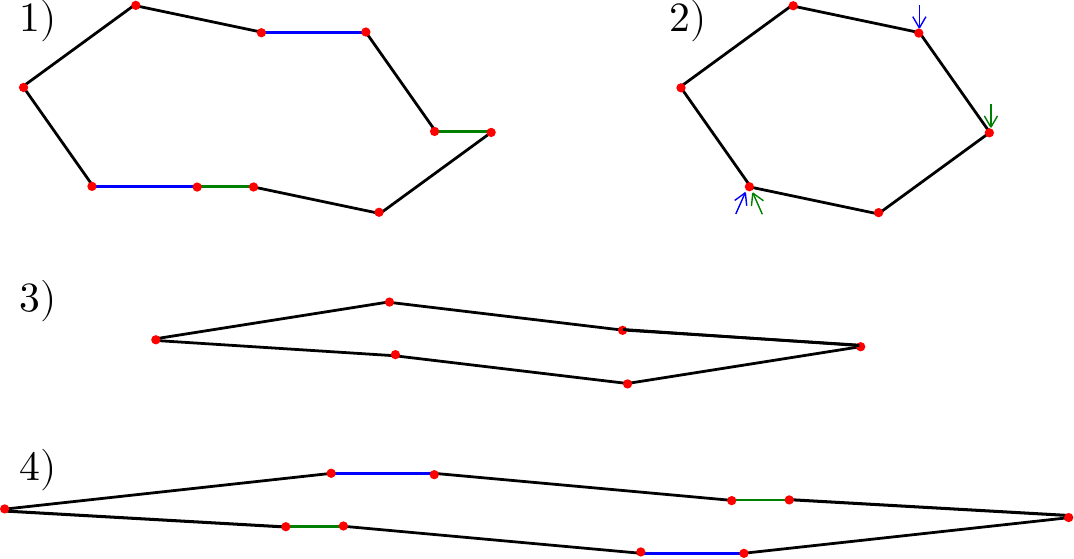}
		%		$\quad\quad\quad\quad$
		%		\includegraphics[scale=0.25]{drawing-1.pdf}
		\caption{To get to the case where there is no horizontal saddle connections we first contract them, then act by the backward Rauzy-Veech algorithm to finally extend the saddle connections back as well as the intervals corresponding to the symbols which won against saddle connections.}\label{ext}
	\end{figure}
	Indeed, by Lemma \ref{win0}, none of the symbols $\beta_1,\ldots,\beta_k$ wins, hence 
	\[
	\la ^{n_\ep}_{\beta_i}=\la_{\beta_i}\ \text{and}\ \tau^{n_\ep}_{\beta_i}=0\ \text{for every}\ i=1,\ldots,k.
	\]
	On the other hand if $\al\neq\beta_i$ for every $i=1,\ldots,k$, then by \eqref{matback} we have 
	\[
	\la ^{n_\ep}_{\al}=\tilde\la^{n_\ep}_\al+\sum_{i=1}^{k}A^{-n_\ep}_{\al\beta_i}(\pi,\la,\tau)\la_{\beta_i},
	\]
	but
		\[
	\tau ^{n_\ep}_{\al}=\tilde\tau^{n_\ep}_\al+\sum_{i=1}^{k}A^{-n_\ep}_{\al\beta_i}(\pi,\la,\tau)\tau_{\beta_i}=\tilde\tau^{n_\ep}_\al,
	\]
	thus 
		\[
	\max_{\al\in\mathcal A}|\tau^{-n_\ep}_\al|<\ep,
	\]
	which proves the corollary.
	\end{proof}
We can now prove Theorem \ref{min}.
\begin{proof}[Proof of Theorem \ref{min}.] Note first that in view of Lemma \ref{orb0} the rightwards horizontal separatrix starting at point $0:=(0,0)$, which  we denote by $\g_0:=(T_t(0))_{t>\ge 0}$, is infinite, i.e it is not a saddle connection. Note moreover that

	$(``\Rightarrow")$ Assume now that the horizontal flow is minimal. Then the rightwards separatrix is dense in $(\pi,\la,\tau)$. Let us show that this forces all horizontal saddle connections as sides of the polygonal representation after sufficient number of backward Rauzy-Veech induction steps. Suppose by contradiction that one of the horizontal saddle connections does not appear as a vertical segment for infinitely many $n>0$ in $(\pi^{-n},\la^{-n},\tau^{-n})$. 
	Then by Lemma \ref{win0}, it does not appear as a vertical segment for any $n>0$. By Theorem \ref{noinfty} there exists $\al$ such that $\tau_\al^{-n}\neq 0$ for all $n\in\n$ and $\al$ is never a backward winner (note that $\al$ can win finitely many times, then we renumerate the steps of backward Rauzy-Veech induction). Since by Remark \ref{hitby0} the separatrix $\g_0$ does not hit the interval corresponding to $\al$, we obtain that $\g_0$ cannot pass through the interior of  the triangle given by vertices
	\[
	\begin{split}
	&a(\pi^{-n},\la^{-n},\tau^{-n},\pi_0(\al)-1);\\
	(\sum_{i\le\pi_0^{-n}(\al)}&\la_{(\pi_0^{-n})^{-1}(i)},\sum_{i\le\pi_0^{-n}(\al)}\tau_{(\pi_0^{-n})^{-1}(i)});\\
	&a(\pi^{-n},\la^{-n},\tau^{-n},\pi_0(\al)),
	\end{split}
	\]
	if $\tau_\al>0$ or 
		\[
		\begin{split}
	&a(\pi^{-n},\la^{-n},\tau^{-n},\pi_0(\al)-1);\\
(\sum_{i\le\pi_0^{-n}(\al)}&\la_{(\pi_0^{-n})^{-1}(i)},\sum_{i\le\pi_0^{-n}(\al)}\tau_{(\pi_0^{-n})^{-1}(i)});\\
&a(\pi^{-n},\la^{-n},\tau^{-n},\pi_0(\al)),
	\end{split}
	\]
	if $\tau_\al<0$, where the definition of the triangle does not depend on $n$ (one can also swap in the above definitions $\pi_0$ to $\pi_1$ and $a$ to $b$),  see Figure \ref{triangle}. Since this triangle is of positive Lebesgue measure, this contradicts the minimality of the horizontal flow.
	
				\begin{figure}[h]
		\includegraphics[scale=1.1]{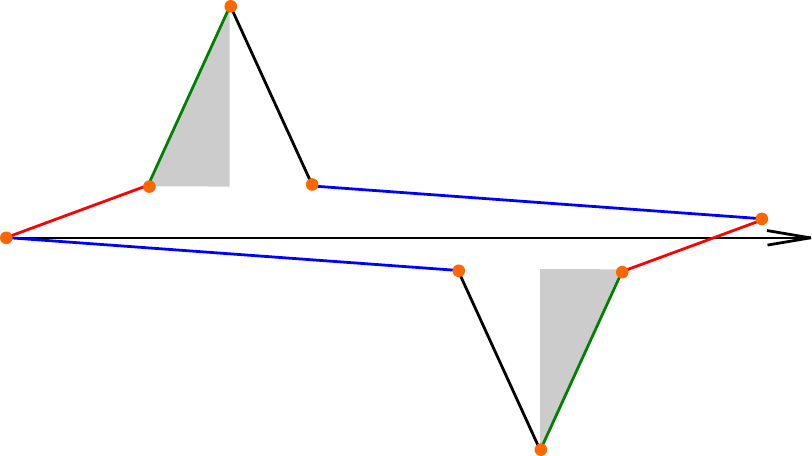}
		%		$\quad\quad\quad\quad$
		%		\includegraphics[scale=0.25]{drawing-1.pdf}
		\caption{The horizontal separatrix starting from the origin does not visit the shaded triangles.}\label{triangle}
	\end{figure}
	
	$(``\Leftarrow")$ Assume now that there exists $N\in \n$ such that for all $n>N$ all horizontal saddle connections are sides of the polygon $(\pi^{-n},\la^{-n},\tau^{-n})$. We want to show that this implies minimality. By Proposition \ref{decre} there exists a sequence $\{n_k\}_{k\in\n}$ such that 
	\[
	\lim_{k\to\infty}\max_{\al\in\mathcal A}{\tau^{-n_k}_\al}=0. 
	\]
	Fix $\ep>0$ and let $k'\in\n$ be big enough so that $\max_{\al\in\mathcal A}{\tau^{-n_k'}_\al}<\ep/d$. Then $(\pi^{-n_k'},\la^{-n_k'},\tau^{-n_k'})$ seen as a polygon in $\re^2$ is included as a subset in a rectangle $[-\ep,\ep]\times [0,\sum_{\al\in\mathcal A}\la^{-n_k'}_\al]$. Hence every point in $(\pi^{-n_k'},\la^{-n_k'},\tau^{-n_k'})=(\pi,\la,\tau)$ is at most $\ep$-far from $\g_0$ which yields the density of $\g_0$. However by Corollary \ref{minorb} this is equivalent to the minimality of the horizontal flow, which finishes the proof.
	
				\begin{figure}[h]
		\includegraphics[scale=0.8]{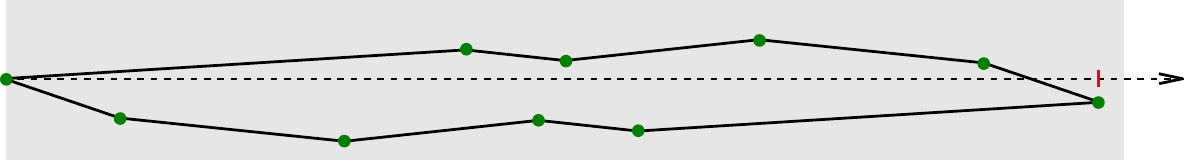}
		%		$\quad\quad\quad\quad$
		%		\includegraphics[scale=0.25]{drawing-1.pdf}
		\caption{The rectangle with a separatrix segment as its axis contains the whole surface.}
	\end{figure}

	\end{proof}


\begin{thebibliography}{9}
		\bibitem{Keane}M.\ Keane,
	\emph{Interval exchange transformations},
	Math.\ Z.\ 141 (1975), 25-31.
	\bibitem{MUY} S.\ Marmi, C.\ Ulcigrai, J.C.\ Yoccoz, \emph{On Roth type conditions, duality and central Birkhoff sums for I.E.M.}, Asterisque 416, 2020, p. 65–132
	\bibitem{MMY} S.\ Marmi, P.\ Moussa, J.C.\ Yoccoz, \emph{The cohomological equation for Roth-type interval exchange maps}, J. Amer. Math. Soc (3) 100 (2010), no. 3, 639-669.
	\bibitem{Viana} M.\ Viana, \emph{Ergodic theory of interval exchange maps}, Rev.\ Mat.\ Complut.\ 19 (2006), 7-100.
\bibitem{Yoccoz} J.C.\ Yoccoz, \emph{Interval exchange maps and translation surfaces. Homogeneous flows, moduli spaces and arithmetic}, 1–69, Clay Math.\ Proc.\ 10, Amer.\ Math.\ Soc., Providence, RI, 2010.
\end{thebibliography}
\end{document}